\theoremstyle{plain}
\newtheorem{theorem}{Theorem}[section]
\newtheorem{lemma}[theorem]{Lemma}
\newtheorem{claim}[theorem]{Claim}
\newtheorem{proposition}[theorem]{Proposition}
\newtheorem{corollary}[theorem]{Corollary}
\newtheorem{conjecture}[theorem]{Conjecture}
\newtheorem{question}[theorem]{Question}
\theoremstyle{definition}
\newtheorem{definition}[theorem]{Definition}
\newcommand{\R}{\mathbb{R}}
\newcommand{\E}{\mathbb{E}}
\newcommand{\indicator}[1]{\mathbf{1}_{#1}}
\newcommand{\eps}{\ensuremath{\varepsilon}}
\newcommand{\uar}{u.a.r.\ }
\newcommand{\ie}{i.e.\ }
\newcommand*\samethanks[1][\value{footnote}]{\footnotemark[#1]}
\title{Long Cycles, Heavy Cycles and Cycle Decompositions in Digraphs}
\author{
	Charlotte Knierim\thanks{Department of Computer Science, ETH Zurich, Switzerland\newline \{cknierim$\vert$larcherm$\vert$anders.martinsson$\vert$andreas.noever\}@inf.ethz.ch\newline
	}
	\and
	Maxime Larcher\samethanks[1]
	\and Anders Martinsson\samethanks[1]
	\and Andreas Noever\samethanks[1]}
\newcommand{\din}{\ensuremath{\mathrm{d}^-}}
\newcommand{\dout}{\ensuremath{\mathrm{d}^+}}
\newcommand{\drem}{\ensuremath{\mathrm{rd}^+}}
\newcommand{\wrem}{\ensuremath{\mathrm{rw}^+}}
\newcommand{\win}{\ensuremath{\mathrm{w}^-}}
\newcommand{\wout}{\ensuremath{\mathrm{w}^+}}
\renewcommand{\deg}{\ensuremath{\mathrm{d}}}
\newcommand{\weight}{\ensuremath{\mathrm{w}}}
\DeclareMathOperator{\argmin}{arg\,min}
\newcommand{\Nin}{\ensuremath{\mathrm{N^-}}}
\newcommand{\outneighb}{\ensuremath{\mathrm{N^+}}}
\newcommand{\minoutdeg}{\ensuremath{{\delta^+}}}
\newcommand{\eqdef}{\ensuremath{:=}}
\newcommand\numberthis{\addtocounter{equation}{1}\tag{\theequation}}
\begin{document}
\maketitle

\begin{abstract}

Haj\'os conjectured in 1968 that every Eulerian \(n\)-vertex graph can be decomposed into at most $\lfloor (n-1)/2\rfloor$ edge-disjoint cycles. This has been confirmed for some special graph classes, but the general case remains open. In a sequence of papers by Bienia and Meyniel (1986), Dean (1986), and Bollob\'as and Scott (1996) it was analogously conjectured that every \emph{directed} Eulerian graph can be decomposed into $O(n)$ cycles.

In this paper, we show that every directed Eulerian graph can be decomposed into $O(n \log \Delta)$ disjoint cycles, thus making progress towards the conjecture by Bollob\'as and Scott. Our approach is based on finding heavy cycles in certain edge-weightings of directed graphs. As a further consequence of our techniques, we prove that for every edge-weighted digraph in which every vertex has out-weight at least $1$, there exists a cycle with weight at least $\Omega(\log \log n/{\log n})$, thus resolving a question by Bollob\'as and Scott.

\end{abstract}
% !TeX root = long_directed_cycles.tex
% !TeX spellcheck = en_GB 

\section{Introduction}
\paragraph{Cycle decompositions}
An Euler tour in a graph $G$ is a closed walk that uses every edge of $G$ exactly once. A graph which has an Euler tour is called \emph{Eulerian}. A classical result of  Euler states that an undirected graph is Eulerian if and only if it is connected and all the degrees are even. 
The notions of Euler tours and Eulerian graph naturally generalise to directed graphs. Intuitively, a graph is Eulerian if for every vertex it holds that whenever we enter this vertex there is a possibility to continue our tour. A digraph is Eulerian if and only if it is connected and for every vertex we have equally many incoming and outgoing edges. It is easy to see that by greedily removing cycles, these graphs can be decomposed into an edge-disjoint union of cycles. 

The characteristics of such decompositions have been subject to numerous conjectures in graph theory. One famous example is a conjecture by Kelly, stating that for any sufficiently large $n$, every regular $n$-vertex tournament can be decomposed into Hamilton cycles. A stronger version of this was proved by K\"uhn and Osthus in 2013. 
\begin{theorem}[K\"uhn, Osthus \cite{KO13}]
	For every $\eps>0$ there exists $n_0$ such that every $r$-regular oriented graph $G$ on $n\ge n_0$ vertices with $r\ge 3n/8+\eps n$ has a Hamilton decomposition. In particular, there exists $n_0$ such that every regular tournament on $n\ge n_0$ vertices has a Hamilton decomposition.
\end{theorem}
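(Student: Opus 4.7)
The plan is to combine a robust expansion analysis with an \emph{approximate} Hamilton decomposition followed by an absorbing argument. The first step would be to show that the density hypothesis $r \geq 3n/8 + \eps n$ forces $G$ to be a robust outexpander: for every set $S$ with $\eps n \leq |S| \leq (1-\eps)n$, the set of vertices $v$ for which $|\outneighb(v)\cap S| \geq \nu |S|$ is larger than $|S|$ by at least $\nu n$, for some small $\nu=\nu(\eps)$. The threshold $3n/8$ is sharp here because a disjoint union of two $(3n/8)$-quasi-random regular tournaments still lets the expansion estimate go through, but below it one can construct non-expanding examples. Once robust expansion is in hand, one can invoke (a directed version of) the fact that every regular robust outexpander contains a Hamilton cycle, and more strongly, $(1-o(1))r$ edge-disjoint Hamilton cycles.

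Concretely, I would set aside two sparse random subgraphs $A$ (an \emph{absorber}) and $H$ (a \emph{reservoir}) at the start, each of maximum degree $O(\log n)$, chosen so that the graph $G' = G \setminus (A \cup H)$ is still almost regular and still a robust outexpander with slightly weakened parameters. The second step is then to extract edge-disjoint Hamilton cycles from $G'$ greedily: at each stage, use robust outexpansion to find a Hamilton cycle in the remaining graph, removing it as we go. Standard martingale concentration should show that after $(1-\delta)r$ iterations the leftover digraph $L \subseteq G'$ still has bounded maximum degree and inherits quasi-random properties from the random setup.

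The third and hardest step is the absorbing: one needs to design $A$ so that $A \cup L$ decomposes \emph{exactly} into the required number of Hamilton cycles, regardless of the precise leftover $L$. The standard route is to prebuild, for every possible low-degree leftover pattern, a collection of "switching gadgets" inside $A$ that reroute a few Hamilton cycles so as to absorb any stray edge of $L$. Making this robust simultaneously over all possible $L$ is the delicate part: one uses a near-regular decomposition of $A$ itself together with a union-bound over a polynomial number of leftover configurations, exploiting quasi-randomness to ensure each gadget exists in the expected count.

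The main obstacle is clearly the last step. The approximate decomposition is now well-established technology, but turning it into a decomposition of the \emph{entire} edge set requires that $A$ be simultaneously (i) sparse enough not to disturb the expansion used in step two, and (ii) flexible enough to absorb every possible leftover. Balancing these two demands is what forces the lengthy case analysis in the original paper; any proof attempt essentially stands or falls on engineering this absorber correctly.
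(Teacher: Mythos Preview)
This theorem is not proved in the paper at all: it is quoted in the introduction as a result of K\"uhn and Osthus \cite{KO13}, purely as background for the discussion of cycle decompositions. The paper gives no argument for it, not even a sketch, so there is no ``paper's own proof'' to compare your proposal against.

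Your outline does broadly match the strategy of the original K\"uhn--Osthus paper (robust outexpansion, approximate Hamilton decomposition, absorbing to handle the leftover), so as a summary of how \emph{that} paper proceeds it is reasonable. But a couple of your remarks are off. The sentence about a ``disjoint union of two $(3n/8)$-quasi-random regular tournaments'' making the expansion go through is backwards: a disjoint union is precisely a non-expander, and the point of the $3n/8$ threshold is rather that extremal non-expanding oriented graphs (blow-ups of short cycles) appear just below it. Also, the absorber in \cite{KO13} is not built by a union bound over ``a polynomial number of leftover configurations''; the leftover is an arbitrary sparse almost-regular digraph, and the absorbing structure is engineered to handle any such leftover via a carefully prepared system of path systems and switches, not by enumerating cases. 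These are not fatal to a high-level sketch, but they indicate that the actual execution in \cite{KO13} is considerably more delicate than your paragraph suggests.

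In any case, for the purposes of the present paper no proof is expected or required here.
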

For undirected graphs, Haj\'os conjectured in 1968 that every Eulerian graph has a cycle decomposition with few cycles.
\begin{conjecture}[Haj\'os (cf.~\cite{B14,L68})]
Every Eulerian graph on $n$ vertices has an edge-decomposition into at most $\lfloor\frac{n-1}{2}\rfloor$ cycles.
\end{conjecture}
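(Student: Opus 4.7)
The natural plan is induction on $n$: in the inductive step, one wants to find a cycle $C$ such that $G - E(C)$ is Eulerian and has at least two additional vertices of degree $0$, so that the inductive bound $1 + \lfloor (n-3)/2 \rfloor \le \lfloor (n-1)/2 \rfloor$ goes through. The extremal example $K_{2k+1}$, which is decomposed into exactly $k$ edge-disjoint Hamilton cycles and saturates the conjecture, confirms that the ``two vertices per cycle'' rate is the correct asymptotic target; in particular, any approach whose inductive step eliminates only one vertex per cycle removed will be off by a factor of two.

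First I would reduce to the 2-edge-connected case. Any Eulerian graph decomposes along its cut-vertices into 2-edge-connected Eulerian subgraphs on $n_1, \dots, n_k$ vertices with $\sum_i (n_i - 1) \le n - 1$, and the union of their block decompositions yields $\sum_i \lfloor (n_i - 1)/2 \rfloor \le \lfloor (n-1)/2 \rfloor$ cycles overall. This reduces the problem to 2-edge-connected Eulerian graphs $G$ where we need to produce a cycle that isolates two vertices after removal.

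The heart of the proof would be the 2-edge-connected inductive step. Pick a vertex $v$ of minimum degree $2k$. When $k = 1$, try to find a cycle through $v$ that also passes through a second degree-$2$ vertex $w$ so that both $v$ and $w$ become isolated after removal; this can be attempted by growing a longest path from $v$ and closing it through a conveniently chosen low-degree vertex. When $k \ge 2$, apply a Lov\'asz-style \emph{edge-splitting} at $v$: replace two edges $uv, vw$ by a single new edge $uw$, reducing the degree at $v$ by $2$ while preserving the Eulerian condition. Mader's splitting-off theorem would guarantee a choice that also preserves $2$-edge-connectivity, and each split eventually corresponds to one cycle in the final decomposition (the edge $uw$ together with the subpath we have shortcut around it).

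The main obstacle, and the reason the conjecture has resisted proof since 1968, is controlling the \emph{exchange rate} between cycle removals and vertex eliminations. A single splitting or short cycle extraction typically eliminates only one vertex per cycle, which is off by a factor of two from what the bound demands; to compensate, one needs a reliable supply of ``efficient'' cycles passing through two low-degree vertices simultaneously. In sparse, irregular $2$-edge-connected Eulerian graphs with degree-$2$ vertices scattered among high-degree hubs, there is no known structural reason why such paired cycles must always exist. Any successful resolution would almost certainly have to combine the local splitting machinery sketched above with a global lower bound on the length or weight of a carefully chosen cycle, in the spirit of the heavy-cycle techniques developed later in this paper.
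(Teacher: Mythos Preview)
This statement is Haj\'os' conjecture, which the paper presents as an \emph{open} conjecture, not a theorem; there is no proof in the paper to compare against. The paper explicitly lists only partial results (planar graphs, small pathwidth, etc.) and the recent approximate version of Gir\~ao--Granet--K\"uhn--Osthus for dense graphs.

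Your proposal is not a proof either, and you essentially say so yourself in the final paragraph. The block decomposition reduction to the $2$-edge-connected case is fine, and the idea of splitting off at a minimum-degree vertex is a standard opening move. But the core of the argument --- producing, in every $2$-edge-connected Eulerian graph, a cycle whose removal isolates \emph{two} vertices simultaneously --- is exactly the step you cannot carry out, and you acknowledge this. The sentence ``there is no known structural reason why such paired cycles must always exist'' is an accurate description of the state of the art, not a proof step. Mader-type splitting preserves $2$-edge-connectivity but gives no control over how many vertices are eliminated per cycle in the resulting decomposition, so the ``exchange rate'' problem you flag is a genuine and well-known obstruction, not a technicality to be patched.

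In short: the paper does not claim to prove this conjecture, and neither does your proposal. What you have written is a reasonable informal discussion of why the naive inductive approach stalls, but it should not be labelled a proof.
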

This conjecture has been confirmed for some special graph classes (e.g.\ planar graphs \cite{S92}, graphs with pathwidth at most six \cite{FGH17}, projective graphs \cite{FX02} and small graphs on at most 12 vertices~\cite{HNS17}). Very recently, Gir\~ao, Granet, K\"uhn and Osthus \cite{GGKO19} proved an approximate version of Haj\'os' conjecture in dense graphs. They also show that for graphs with sufficiently large minimum degree, the number of cycles required is asymptotically \(\Delta / 2\) rather than \(n/2\).

\begin{theorem}[Gir\~ao, Granet, K\"uhn, Osthus \cite{GGKO19}]
For any $ \varepsilon>0$, there exists $n_0$ such that for any Eulerian graph $G$ on $n\geq n_0$ vertices the following hold.
\begin{enumerate}[label=(\roman*)]
\item If $\delta(G)\geq \varepsilon n$, then $G$ can be decomposed into $n/2 + o( n)$ cycles.
\item If $\delta(G)\geq (1/2+\varepsilon)n$, then $G$ can be decomposed into $\Delta(G)/2+o( n)$ cycles.
\end{enumerate}
\end{theorem}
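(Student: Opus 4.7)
The plan is to combine an approximate Hamilton decomposition of the bulk of $G$ with an absorbing argument that handles the sparse leftover.

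\textbf{Step 1 (Reserve a random sparse subgraph).} Before doing anything structural, pick a random subgraph $R \subseteq G$ by including each edge independently with probability $p = 1/\log n$. Standard concentration shows that $R$ has $\Theta(\deg(v)/\log n)$ edges at each vertex \whp; by locally swapping a bounded number of edges we can make both $R$ and $G' := G \setminus R$ Eulerian. Crucially, $\delta(G') \geq \varepsilon n (1-o(1))$ and $\Delta(G') \leq \Delta(G)$, while at the same time $\delta(R) = \Omega(n/\log n)$. The role of $R$ is to guarantee that the eventual residual graph still has linear-in-$n/\log n$ minimum degree.

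\textbf{Step 2 (Approximate Hamilton decomposition of $G'$).} In the regime $\delta(G') \geq \varepsilon n$, I would invoke a K\"uhn--Osthus style near-Hamilton-decomposition result for dense Eulerian graphs to decompose $G'$ into at most $\Delta(G)/2$ edge-disjoint Hamilton cycles plus a leftover $L$ with $|E(L)| = o(n^2)$. For part (ii), the stronger hypothesis $\delta(G) \geq (1/2+\varepsilon)n$ gives Hamilton-connectivity via Chv\'atal--Erd\H{o}s or Ore-type conditions, which lets us insist that the Hamilton cycles pass through the maximum-degree vertex $v^*$ and consume exactly two of its incident edges each, producing the sharper count $\Delta(G)/2 + o(n)$ instead of the cruder $n/2 + o(n)$.

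\textbf{Step 3 (Absorb the residual).} The Eulerian graph $H := L \cup R$ has $o(n^2)$ edges but $\delta(H) = \Omega(n/\log n)$, since $R$ alone already contributes that many edges at every vertex. The plan is then to iteratively extract long cycles from $H$ via P\'osa-type rotation arguments, obtaining cycles of length $\Omega(n/\log n)$ and hence decomposing the $o(n^2)$ remaining edges using only $o(n)$ cycles, which is absorbed in the error term. Summing the bounds from Steps~2 and~3 gives $n/2 + o(n)$ in case~(i) and $\Delta(G)/2 + o(n)$ in case~(ii).

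The \textbf{main obstacle} is Step~3: without reserving $R$ up front, the leftover of a naive iterative Hamilton-removal scheme could be an almost arbitrary sparse Eulerian graph (say, a disjoint union of triangles) which requires $\Omega(|E|)$ cycles to decompose. Orchestrating the reservation so that $R$ is simultaneously (a) light enough not to disturb the approximate Hamilton decomposition of $G'$, (b) heavy enough at every vertex to guarantee long cycles in $H$, and (c) compatible with the Eulerian-parity constraint throughout every intermediate graph, is where I expect the real technical work and most likely a more delicate regularity-based or absorber construction than the one sketched here.
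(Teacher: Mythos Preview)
This theorem is not proved in the present paper at all: it is quoted in the introduction as a result of Gir\~ao, Granet, K\"uhn and Osthus \cite{GGKO19}, with no proof or sketch given here. There is therefore no ``paper's own proof'' to compare your proposal against; the authors simply cite the statement as background motivation for Haj\'os' conjecture before moving on to their own results on directed graphs.

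As for your sketch on its own terms: the high-level architecture (reserve a random subgraph, approximate Hamilton decomposition of the bulk, absorb the leftover) is indeed in the spirit of how dense decomposition results of this type are proved, and the original paper \cite{GGKO19} does use regularity/robust-expander machinery together with an absorbing step. But your Step~2 is doing almost all the work by fiat --- an ``approximate Hamilton decomposition'' of an arbitrary Eulerian graph with $\delta \ge \varepsilon n$ into $\Delta/2$ Hamilton cycles plus $o(n^2)$ edges is itself essentially the theorem you are trying to prove, and is not something one can simply invoke. The actual argument in \cite{GGKO19} requires a substantial robust-expansion and regularity framework to get that far. Your Step~3 is also optimistic: after removing many Hamilton cycles the minimum degree of $R$ alone need not survive, since edges of $R$ can be used up maintaining the Eulerian parity corrections you mention in Step~1, and P\'osa rotation does not straightforwardly give $\Omega(n/\log n)$-cycles in an Eulerian graph of that minimum degree. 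So the outline is plausible but far from a proof, and in any case is not something this paper attempts.
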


In the case of general Eulerian graphs, another natural way to approach Haj\'os' conjecture is to ask what is the minimum number of cycles we can provably decompose any $n$-vertex Eulerian graph into. Erd\"os and Gallai conjectured that every graph can be decomposed into $O(n)$ cycles and edges (see \cite{E83}, Section II, Conjecture 6), which in the case of Eulerian graphs, would translate to at most linearly many cycles. A simple upper bound comes from the connection with long cycles. A famous theorem of Erd\H{o}s and Gallai~\cite{EG59} guarantees that any graph $G$ with $n$ vertices and $m$ edges has a cycle of length $\Omega\left(\frac{m}{n}\right)$. By repeatedly removing such a cycle, we obtain a decomposition into $O(n\log n)$ cycles. Conlon, Fox and Sudakov \cite{CFS14} further improve this to $O(n\log \log \frac{m}{n})$ cycles, by considering structural properties of graphs without long cycles.  

 The question of how many cycles we need to decompose any Eulerian digraph into edge-disjoint cycles was considered in a sequence of papers by Bienia and Meyniel~\cite{BM86}, Dean~\cite{D86} and later Bollob\'as and Scott~\cite{BS96}.
\begin{conjecture}[Bollob\'as, Scott \cite{BS96}]
\label{conj:bollobas_scott}
There exists a constant $C>0$ such that every Eulerian digraph on $n$ vertices has an edge-decomposition into at most $Cn$ cycles.
\end{conjecture}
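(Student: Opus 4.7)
My plan is to start from the standard greedy strategy and then try to strengthen it. Using the classical fact that any digraph with minimum out-degree at least $k$ contains a cycle of length at least $k+1$ (applied to a subgraph of maximum average degree), every Eulerian digraph on $n$ vertices with $m$ edges admits a cycle of length $\Omega(m/n)$. Repeatedly extracting such a cycle decreases the edge count by a multiplicative factor $\bigl(1-\Omega(1/n)\bigr)$, and the process terminates after $O(n\log(m/n))=O(n\log\Delta)$ steps. This matches the main theorem announced in the abstract, but falls short of the conjectured $O(n)$ bound by a $\log\Delta$ factor.

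To close this gap, I would try to extract many long edge-disjoint cycles in parallel rather than one at a time. Following the philosophy hinted at in the abstract, weight every edge $e=(u,v)$ by $w(e)=1/d^+(u)$, so that every vertex has out-weight exactly $1$, and look for a family of edge-disjoint cycles whose total $w$-weight is $\Omega(n)$ and that together cover a constant fraction of all edges. Such a family automatically has size $O(n)$, and iterating the construction on the residual (still Eulerian) digraph would then give a linear-size decomposition. A complementary angle would be structural: one tries to identify the extremal Eulerian digraphs --- those whose minimum decomposition is largest relative to $n$ --- and show that they are built from disjoint long cycles and hence decomposable into $O(n)$ cycles directly.

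The main obstacle is the logarithmic loss intrinsic to edge-weighting methods. As indicated in the abstract, the best one can guarantee for a \emph{single} cycle in a vertex-out-weight-$1$ digraph is weight $\Theta\bigl(\log\log n/\log n\bigr)$, far from the $\Omega(1)$ average weight that a linear decomposition requires. Bypassing this loss seems to demand a genuinely global construction --- for instance a flow-theoretic or probabilistic argument producing many heavy cycles simultaneously and edge-disjointly --- rather than the iterative peeling that naturally underlies weight-based heavy-cycle lemmas. Since the best published bound in this direction is precisely the $O(n\log\Delta)$ result of the present paper, I would expect this step to be the genuine bottleneck, and a full proof of the conjecture likely requires a new technique orthogonal to the heavy-cycle framework.
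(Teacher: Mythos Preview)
The statement is Conjecture~\ref{conj:bollobas_scott}, which is open; the paper does not prove it, so there is no proof to compare against. You correctly recognise this, and your discussion in the last two paragraphs of where the obstruction lies is accurate.

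Your first paragraph, however, contains a real gap. You assert that every Eulerian digraph on $n$ vertices with $m$ edges has a cycle of length $\Omega(m/n)$, by passing to a subgraph of maximum average degree and applying the greedy bound $\delta^{+}+1$. That reduction works for \emph{undirected} graphs (Erd\H{o}s--Gallai) but fails for digraphs: a subgraph of high average degree need not have high minimum out-degree, and once a vertex is deleted the graph is no longer balanced, so the Eulerian hypothesis cannot be iterated. Indeed, the best long-cycle bound known for Eulerian digraphs prior to this paper was only $\Omega(\sqrt{m/n})$ due to Huang, Ma, Shapira, Sudakov and Yuster, and the paper's own corollary only reaches $\Omega\bigl(m/(n\log\Delta)\bigr)$; had your $\Omega(m/n)$ claim been correct it would supersede both. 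Consequently the paper's route to the $O(n\log\Delta)$ decomposition is not the one you sketch: it combines Lemma~\ref{lem:small_decomp_existence} with Corollary~\ref{cor:weighted_general_constant_weight_cycle_existence}, which produces a cycle of constant \emph{weight} for the weighting $w(u,v)=1/\dout(u)$ whenever $\delta\geq 50\log\Delta$, rather than a cycle that is long in the unweighted sense.
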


Dean observed in~\cite{D86} that the complete symmetric digraph on $4$ vertices $K^*_4$ cannot be decomposed into fewer than $4$ cycles, so the digraph made of $M$ copies of $K^*_4$, all meeting in one cut vertex, contains $3M+1$ vertices and required $4M$ cycles. This implies that if Conjecture \ref{conj:bollobas_scott} is true then $C$ has to be at least $4/3$.

\paragraph{Finding long cycles}
The connection between cycle decompositions and long cycles goes both ways. On the one hand, proving Conjecture \ref{conj:bollobas_scott} would give an Erd\H{o}s-Gallai-like result for directed graphs by considering the average number of edges per cycle. On the other hand finding cycles of length $\Theta(m/n)$ would give Conjecture \ref{conj:bollobas_scott} up to a log factor, in the same manner as for undirected graphs above. The existence of long cycles in directed graph is an interesting area on its own. As there are acyclic tournaments (where $\Theta(m/n)=\Theta(n)$) it is clear that we cannot hope to get a statement in full generality. 

It is easy to see by a greedy construction that every digraph with minimum out-degree $\minoutdeg$ has a cycle of length at least $1+\minoutdeg$. Simply start in any vertex $v_0$, and greedily build a path $v_0, v_1, \dots$ until we reach a vertex $v_k$ whose entire out-neighbourhood has already been visited. Then close a cycle by connecting $v_k$ to the earliest out-neighbour to appear in the path.

 The best known lower bound for Eulerian digraphs is due to Huang, Ma, Shapira, Sudakov and Yuster~\cite{HMSSY13}. We remark that this bound is tight, up to a constant factor, when $m=\Theta(n)$ or $m=\Theta(n^2)$.

\begin{theorem}[Huang, Ma, Shapira, Sudakov and Yuster~\cite{HMSSY13}]
	Every Eulerian digraph with $n$ vertices and $m$ edges has a cycle of length at least $1+\max\{m^2/2n^3,\lfloor\sqrt{m/n}\rfloor\}$.
\end{theorem}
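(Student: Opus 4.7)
The plan is to establish the two bounds inside the maximum separately, $\lfloor\sqrt{m/n}\rfloor+1$ and $m^2/(2n^3)+1$, and then take their maximum. The first is a reduction to the greedy cycle construction stated just before the theorem (any digraph with minimum out-degree $\minoutdeg$ contains a cycle of length $\geq\minoutdeg+1$); the second, which dominates only for very dense multigraphs, requires a more careful double-counting argument on a cycle decomposition.

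For the bound $\lfloor\sqrt{m/n}\rfloor+1$, set $T=\lfloor\sqrt{m/n}\rfloor$ and proceed by induction on $n$. If $\minoutdeg(G)\geq T$, the greedy argument already produces a cycle of length $\geq T+1$. Otherwise, because $G$ is Eulerian, some vertex $v$ has $\dout(v)=\din(v)<T$; splice $v$ out by pairing each in-edge $(u,v)$ with a distinct out-edge $(v,w)$ and replacing the pair with a shortcut $(u,w)$. The resulting multigraph $G'$ is still Eulerian, has $n-1$ vertices and $m-\dout(v)$ edges, and satisfies $m'/n'\geq m/n$ (since $\dout(v)<\sqrt{m/n}\leq m/n$ whenever $m\geq n$). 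Induction on $G'$ produces a cycle $C'$ of length $\geq T+1$, which I would lift back to $G$ by replacing each shortcut used in $C'$ by the corresponding length-$2$ path through $v$. If $C'$ uses at most one shortcut, the lift is a simple cycle in $G$ of length $\geq T+1$; if several shortcuts are used, the lift is a closed trail visiting $v$ multiple times, and a careful choice of pairing (or a slightly strengthened inductive statement keeping track of the number of shortcuts used) extracts a simple cycle of the required length.

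For the bound $m^2/(2n^3)+1$, I would argue by contradiction: assume every cycle has length at most $L$. Decompose $G$ edge-disjointly into simple cycles $C_1,\dots,C_k$, which is possible since $G$ is Eulerian. Each vertex $v$ then belongs to exactly $\dout(v)$ of these cycles, since the $\dout(v)$ outgoing edges at $v$ are distributed among distinct cycles. The double counting
\[
\sum_v \dout(v)^2 \;=\; \sum_{i,j}|V(C_i)\cap V(C_j)|,
\]
combined with $\sum_v \dout(v)^2\geq m^2/n$ (Cauchy--Schwarz) and the bound $|V(C_i)\cap V(C_j)|\leq L$ for $i\neq j$, relates $k$, $L$, $m$, and $n$. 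Combined with $k\geq m/L$ (each cycle uses at most $L$ edges), the hope is to derive $L\gtrsim m^2/n^3$.

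The main obstacle is the dense-case argument: the plain Cauchy--Schwarz estimate above gives only $k^2 L\gtrsim m^2/n$, which is a factor of $n$ too weak. Bridging this gap requires additional structural input. A natural candidate is an exchange argument on the decomposition: if two cycles $C_i,C_j$ share a vertex $w$, then merging them at $w$ produces a closed trail whose decomposition should contain a cycle strictly longer than $\min(|C_i|,|C_j|)$. Taking a decomposition in which this improvement is impossible --- one minimizing the maximum cycle length --- forces the off-diagonal terms $|V(C_i)\cap V(C_j)|$ to be smaller than $L$ on average in a structural sense, which combined with the identity above should yield the tight bound $L\geq m^2/(2n^3)$.
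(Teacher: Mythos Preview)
The paper does not prove this theorem: it is quoted from \cite{HMSSY13} as background in the introduction, with no argument given. So there is nothing in the paper to compare your proposal against. That said, both halves of your sketch have genuine gaps.

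For the $\lfloor\sqrt{m/n}\rfloor$ bound, the splice-and-lift step breaks in two places. First, splicing a vertex out can create parallel arcs, and the greedy fact ``minimum out-degree $\delta^+$ implies a cycle of length $\geq \delta^+ +1$'' is false for multigraphs (a vertex with $\delta^+$ parallel arcs to a single neighbour gives only a $2$-cycle). Second, and more seriously, your lifting of a cycle $C'\subseteq G'$ that uses $r\geq 2$ shortcuts produces a closed trail in $G$ of length $|C'|+r$ that visits $v$ exactly $r$ times; decomposing at $v$ yields $r$ cycles of total length $|C'|+r$, so you are only guaranteed one cycle of length $\geq |C'|/r + 1$, not $|C'|+1$. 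The parenthetical about ``a careful choice of pairing or a strengthened inductive statement'' is not a fix: no pairing prevents $C'$ from using many shortcuts, and there is no evident inductive invariant that bounds $r$.

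For the $m^2/(2n^3)$ bound, you correctly diagnose that the double-counting
\[
\frac{m^2}{n}\ \leq\ \sum_v \dout(v)^2\ =\ \sum_{i,j}|V(C_i)\cap V(C_j)|\ \leq\ m + k^2 L
\]
together with $k\geq m/L$ is too weak by a factor of $n$ (you have no useful upper bound on $k$). The proposed remedy --- an exchange argument on a decomposition minimizing the longest cycle --- is not an argument but a hope: merging two cycles at a shared vertex produces a closed trail whose decomposition into simple cycles need not contain anything longer than the original maximum, so ``extremal with respect to longest cycle'' does not obviously force off-diagonal intersections to be small. You would need a concrete structural statement here, and none is supplied.
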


By repeatedly removing long cycles, this proposition implies that any Eulerian digraph can be decomposed into $O(\sqrt{mn}\log n)$ cycles.
\paragraph{Finding heavy cycles}
A related problem is to find \emph{heavy} cycles in weighted graphs. Formally, we introduce a weight function $\weight\colon E(G)\to \mathbb{R}_{+}$ that assigns every edge a non-negative weight. We are then looking for cycles that have large weight, where the weight of a graph \(G\), denoted by $\weight(G)$, is the sum of the weights of its edges.
Regarding undirected graphs, the existence of heavy cycles was first conjectured \cite{BF89} and later proved by Bondy and Fan.
\begin{theorem}[Bondy, Fan~\cite{BF91}]
	\label{thm:bondy_fan_91}
	Let $G$ be a 2-edge-connected weighted graph on $n$ vertices. Then $G$ contains a cycle of weight at least $2\weight(G)/(n-1)$.
\end{theorem}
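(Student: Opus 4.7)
I would prove this by induction on $n$, with the base case $n=3$ being a triangle whose unique cycle has weight $\weight(G) = 2\weight(G)/(n-1)$, matching the bound with equality.

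For the inductive step at $n \geq 4$, the standard reduction is to \emph{suppress} a vertex $v$ of degree $2$: delete $v$ together with its two incident edges $va, vb$ and add an edge $ab$ of weight $\weight(va)+\weight(vb)$ between its neighbours (merging into the existing $ab$ edge if present). The resulting graph $G'$ has $n-1$ vertices and the same total weight, and 2-edge-connectivity is preserved since $G-v$ is connected (otherwise $va$ or $vb$ would be a bridge of $G$) and the inserted edge restores the two-sidedness. By induction $G'$ contains a cycle of weight at least $2\weight(G)/(n-2)>2\weight(G)/(n-1)$, which lifts back to a cycle in $G$ of the same weight by re-expanding the new edge through $v$ if needed.

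This reduces the problem to the case $\delta(G)\geq 3$, which is where the real difficulty resides. My plan here is to exhibit a probability distribution on cycles of $G$ under which every edge appears with marginal probability at least $2/(n-1)$; linearity of expectation then gives $\E[\weight(C)] \geq \tfrac{2}{n-1}\weight(G)$, so some realised $C$ achieves the bound. This approach looks promising because in the extremal case $G=K_n$ the uniform distribution on Hamilton cycles realises the marginal $2/(n-1)$ exactly on every edge, matching the bound with equality. I would try to build such a distribution recursively along an ear decomposition $G_0,P_1,\ldots,P_k$ of $G$: close each ear $P_i$ by a randomly chosen one of two edge-disjoint paths in $G_{i-1}$ supplied by Menger's theorem, and randomise the role of $G_0$ as well, tuning the probabilities so that the marginal lower bound holds uniformly across all edges.

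The main obstacle I foresee is controlling the marginals uniformly, particularly for edges of $G_0$ and of ears that only appear deep in the decomposition, which are not automatically hit by later closing cycles. Equivalently, by LP duality the task reduces to constructing a fractional cycle cover of total mass at most $(n-1)/2$ in which every edge is covered at least once, and the delicate combinatorial step is producing such a cover in general; if the ear-based route proves too rigid I would fall back to a direct LP argument or to a sharing/merging of ear-cycles designed to bring the coverage profile under control.
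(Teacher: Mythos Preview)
The paper does not prove this theorem. It is quoted in the introduction as a known result of Bondy and Fan, with a citation to \cite{BF91}, and no proof (or even proof sketch) is given anywhere in the paper. So there is nothing to compare your proposal against.

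As for the proposal itself: the reduction to $\delta(G)\ge 3$ by suppressing degree-$2$ vertices is fine, but the heart of your plan --- producing a distribution on cycles in which every edge has marginal at least $2/(n-1)$, equivalently a fractional cycle cover of total value at most $(n-1)/2$ --- is exactly the statement you are trying to prove, rephrased through LP duality. You acknowledge this yourself (``the delicate combinatorial step is producing such a cover in general''), and the ear-decomposition scheme you sketch does not obviously achieve it: closing each ear through two Menger paths in the earlier graph gives no a priori control on how often edges of $G_0$, or of early ears, get hit, and tuning probabilities ear by ear can easily force total mass above $(n-1)/2$. So as written this is a plan with the key step missing, not a proof. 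The original Bondy--Fan argument does not go via averaging or LP duality; it is an extremal argument on heaviest paths, and if you want to actually prove the theorem you should consult \cite{BF91} directly.
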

Turning to directed graphs, it was observed by Bollob\'as and Scott~\cite{BS96} that there is no hope to come close to this statement in this generality. Taking for instance a directed path on $n$ vertices and putting weight $0$ on the edges of the path, then adding an edge with weight $1$ between every non-adjacent vertices and directing it `backwards' with respect to the path gives a graph where the maximum weight of a cycle is of order $O(\weight(G)/n^2)$. That is, even requiring strong connectedness, the heaviest cycle need not even have asymptotically higher weight than an average edge.

 Instead, Bollob\'as and Scott~\cite{BS96} looked at some sufficient conditions for the existence of heavy cycles. They proved that if every vertex $v$ satisfies \(\wout(v) \eqdef \sum_{(v,u) \in E}{\weight(v,u)} \ge 1\), then there is a cycle $C \subseteq G$ of weight $\weight(C)\ge (24 n)^{-1/3}$. They conjectured that this result could be improved to a cycle of weight at least $2/\log_2 n$, which was done by Li and Zhang~\cite{LZ12} up to a factor of two.

\begin{theorem}[Li, Zhang \cite{LZ12}]
	Let $G$ be a weighted digraph on $n\ge 2$ vertices. If $\wout(v)\ge 1$ for every vertex $v\in V(G)$ then $G$ contains a cycle $C$ of weight $\weight(C)\ge 1/\log_2 n$.
\end{theorem}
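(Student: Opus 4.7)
I first reduce to the strongly connected case by restricting to a terminal strongly connected component of $G$. Every out-edge from a vertex of such a component stays inside, so the hypothesis $\wout(v)\geq 1$ is preserved and the number of vertices only decreases. I would then proceed by induction on $n$. The base case $n=2$ is immediate: strong connectivity and $\wout\geq 1$ force both directed edges between the two vertices to exist and sum to at least $2$, giving a $2$-cycle of weight $\geq 2 \geq 1/\log_2 2$.

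The core of the inductive step is to exploit a maximum-weight simple directed path $P=v_0v_1\cdots v_k$ in $G$. By maximality of $P$, every positive-weight out-edge of the endpoint $v_k$ must land on some $v_i\in P$ (otherwise we could extend $P$ to strictly greater weight), so writing $a_i \eqdef \weight(v_k,v_i)$ we have $\sum_{i<k} a_i \geq \wout(v_k)\geq 1$. Each back-edge of weight $a_i$ closes a cycle $C_i$ of length $k-i+1$ and weight $a_i+\weight(P[i..k])$, and the plan is to extract a heavy $C_i$ from these candidates.

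To do so I would use a dyadic pigeonhole: group the indices according to $\lfloor\log_2(k-i+1)\rfloor\in\{0,1,\ldots,\lceil\log_2 n\rceil\}$, so that some group $B$ carries back-weight at least $1/\lceil\log_2 n\rceil$. All cycles $C_i$ with $i\in B$ have comparable length $L$ and share a common tail segment of $P$. Combining the concentrated back-weight in $B$ with the contribution of that shared tail, and iterating the same argument on the subpath $P[\min B..k]$ treated as a smaller instance (after suitably packaging the back-edges landing in this segment as a scaled out-weight condition on a smaller digraph), should yield a single cycle of weight at least $1/\log_2 n$.

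\textbf{Main obstacle.} The delicate part is extracting exactly the $1/\log_2 n$ factor rather than the $1/(L\log n)$ that a naive average inside a single dyadic bucket produces. Overcoming this requires carefully packaging the back-edges that land on a short path segment as a scaled instance of the very statement being proved, so that recursion on the length of the segment loses only an additive $O(1)$ in $\log_2$ of the relevant size at each level. Making the recursion compose so that the final guarantee is $1/\log_2 n$ rather than something like $1/(\log n)^2$ is, in my view, the technical heart of the argument.
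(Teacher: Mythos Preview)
This theorem is not proved in the paper; it is quoted from Li and Zhang as background, and no argument for it appears anywhere in the text. The paper's own contribution is the stronger Theorem~\ref{thm:weighted_digraph_heavy_cycle_existence}, whose proof uses an entirely different mechanism: a greedily constructed path governed by the potential functions $A_P$ and $B_P$ of Definitions~\ref{def:annoyance2} and~\ref{def:bnnoyance}, with no induction on $n$, no maximum-weight path, and no dyadic bucketing. So there is no ``paper's own proof'' of this particular statement to compare against.

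As for the proposal itself, it is not a proof but an outline with a self-declared hole, and the hole is real. You correctly observe that bucketing the back-edges $v_k\to v_i$ by $\lfloor\log_2(k-i+1)\rfloor$ only yields a bucket carrying total back-weight $\geq 1/\lceil\log_2 n\rceil$; it does not produce a single heavy cycle. Your proposed fix is to ``package the back-edges landing on a short path segment as a scaled out-weight condition on a smaller digraph'' and recurse. But this packaging is never specified, and it is not clear it can be. The hypothesis of the theorem requires $\wout(v)\geq 1$ at \emph{every} vertex, whereas the back-edges you have collected all emanate from the single vertex $v_k$; restricting to the subpath $P[\min B\,..\,k]$ does not give the other vertices any out-weight guarantee, so you do not obtain a smaller instance of the same problem. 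Without a concrete construction of the recursive instance and a verification that the losses telescope to $1/\log_2 n$ rather than $1/(\log n)^2$, the argument stops at exactly the point you flag as the main obstacle.

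If you want to rescue a maximum-weight-path approach, the missing idea is typically to exploit maximality more aggressively than just ``all out-edges of $v_k$ hit $P$'': one also needs control relating the path suffix weights $W_i=\weight(P[i..k])$ to the back-edge weights $a_i$, or to switch to a different extremal object altogether. As written, the proposal does not supply this.
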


In light of his results on weighted undirected graph, Bondy (see~\cite{BS96}) conjectured that the condition \(\wout(v) \ge 1\) implies the existence of cycles of constant weight in directed graphs. Bollob\'as and Scott~\cite{BS96} showed that this was not the case by constructing a graph in which \(\wout(v) = 1\) for every vertex \(v\) but all cycles have weight \(O \left( \log \log n / \log n \right)\). They note that the same construction was found by Spencer (see \cite{BS96}). Their construction is the following: take a rooted tree of depth \(l\) and where every non-leaf vertex has \(k = l^2\) children; orient every edge away from the root and add a directed edge from every leaf to all its predecessors; finally weigh every edge \(\weight(u,v) = 1/\dout(u)\) so that the condition \(\wout(v) = 1\) is trivially satisfied for all vertices \(v\). This directed graph has \( n = \Theta \left( k^l \right) = \Theta \left( l^{2l} \right) \) many vertices so that \(\log n = \Theta \left(l \log l\right), \log \log n = \Theta \left( \log l \right)\). One easily checks that the heaviest cycle is any path from the root to a leaf together with the edge from this leaf to the root. This cycle has weight \(l / k + 1/l = O \left( 1/l \right) = O \left( \log \log n / \log n \right)\).

\paragraph{Our results} 

We prove the following theorem making significant progress towards Conjecture~\ref{conj:bollobas_scott}.
\begin{restatable}{theorem}{mainresult}
\label{thm:main_result}
	Every Eulerian digraph $G$ on $n$ vertices with maximum degree $\Delta$ can be decomposed into $O(n\log \Delta)$ edge-disjoint cycles.
\end{restatable}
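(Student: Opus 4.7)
My plan is to use a potential-function argument driven by iterative removal of weighted-heavy cycles, where the weighting is chosen so that the potential tracks how close the graph is to being empty. Assign to each edge $(u,v)$ the weight $w(u,v) = 1/\dout_G(u)$ in the current (Eulerian) graph $G$. Under this weighting every vertex has out-weight exactly one, so the total weight equals $n$, and for a cycle $C$ one has $\weight(C) = \sum_{v \in C} 1/\dout_G(v)$, since $C$ uses exactly one out-edge at each of its vertices. Define
\[
\Phi(G) \;=\; \sum_{v \in V(G)} H\bigl(\dout_G(v)\bigr),
\qquad H(d) \;=\; \sum_{i=1}^{d} \tfrac{1}{i} \;=\; O(\log d),
\]
so that initially $\Phi(G) \le n \cdot H(\Delta) = O(n \log \Delta)$. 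Because removing a cycle from an Eulerian digraph preserves the Eulerian property, and drops each $\dout(v)$ for $v\in C$ by exactly one, removing $C$ decreases $\Phi$ by exactly $\sum_{v \in C}(H(\dout(v)) - H(\dout(v)-1)) = \weight(C)$.

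The strategy is then straightforward: repeatedly extract a cycle of weight $\Omega(1)$ from the current Eulerian digraph and remove it, continuing until the graph is empty. Each such step decreases $\Phi$ by a constant, so the total number of extracted cycles is at most $O(\Phi) = O(n \log \Delta)$, as required. The bulk of the work therefore lies in establishing the following key statement: \emph{every nonempty Eulerian digraph admits, under the weighting $w(u,v) = 1/\dout(u)$, a cycle of weight $\Omega(1)$}.

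To prove the key statement I would split on how concentrated the out-degrees are. If every nonzero out-degree lies within a constant factor of some value $d$, then the standard greedy walk (start at an arbitrary vertex, follow out-edges as long as possible to unvisited vertices, then close to the earliest returnable one) produces a cycle of length at least $\minoutdeg + 1 = \Omega(d)$; since every vertex on the cycle contributes weight $1/\dout(v) = \Omega(1/d)$, the cycle has weight $\Omega(1)$. When the out-degrees are very spread out, I would pass to the subgraph supported on vertices of sufficiently high out-degree and try to preserve an Eulerian-like structure there (e.g.\ by short-circuiting low-degree vertices through auxiliary paths that use the identity $\din=\dout$ to balance in- and out-degrees in the restriction), thereby reducing to the roughly-uniform case.

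The main obstacle is the key statement itself. The Bollob\'as--Scott--Spencer construction shows that without the Eulerian hypothesis one can only guarantee a cycle of weight $O(\log\log n/\log n)$, so the $\din = \dout$ condition must be used in an essential way; in particular, the general $\Omega(\log\log n/\log n)$ heavy-cycle bound proved elsewhere in the paper is not by itself strong enough, and a genuinely Eulerian strengthening to a constant is needed. The delicate case is that of highly irregular Eulerian digraphs whose high-degree subgraph is no longer Eulerian after restriction: making the reduction to the uniform regime go through in that setting, using the full strength of the Eulerian identity and of the paper's heavy-cycle machinery, is where I expect the real technical difficulty to lie.
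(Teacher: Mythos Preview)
Your potential function $\Phi(G)=\sum_v H(\dout(v))$ and the observation that removing a cycle $C$ decreases $\Phi$ by exactly $\sum_{v\in C}1/\dout(v)$ is precisely the accounting the paper uses (Lemma~\ref{lem:small_decomp_existence}). So the framework is correct.

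The gap is the ``key statement''. You are asking for: \emph{every nonempty Eulerian digraph contains a cycle with $\sum_{v\in C}1/\dout(v)\ge\Omega(1)$}. This is not proved in the paper; it is posed there as an open problem (Question~\ref{conj:us}). Your sketch for it --- reduce to roughly regular out-degrees, and for irregular graphs pass to a high-degree subgraph while somehow preserving an Eulerian-like structure --- is exactly where the difficulty lies, and no one currently knows how to make that reduction go through. So as stated, your plan relies on a statement that is at least as hard as the theorem itself.

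The paper sidesteps this entirely by a different case split: on the \emph{minimum degree} of the current graph, not on the spread of out-degrees. The point is that Corollary~\ref{cor:weighted_general_constant_weight_cycle_existence} (a consequence of the heavy-cycle machinery in Section~\ref{sec:weighted_case}) gives a constant-weight cycle in \emph{any} digraph --- Eulerian or not --- as soon as $\delta\ge 50\log\Delta$. When $\delta<50\log\Delta$, the paper does \emph{not} look for a heavy cycle at all: it simply removes an arbitrary cycle through a vertex of minimum degree. Each vertex can be the minimum-degree vertex at most $50\log\Delta$ times before it disappears, so this ``wasteful'' phase contributes at most $50n\log\Delta$ cycles, which is already within the target budget. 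The heavy-cycle phase then contributes $O(\Phi)=O(n\log\Delta)$ cycles by your potential argument. The missing idea in your proposal is precisely this: you do not need a constant-weight cycle in the low-minimum-degree regime, because you can afford $O(n\log\Delta)$ throwaway cycles there.
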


As a direct consequence of this we can improve the lower bound from Huang, Ma, Shapira, Sudakov and Yuster on the length of a cycle when \(\Omega \left(n \log^2 n \right) \le m \le O\left( n^2 / \log n \right) \). 

\begin{corollary}
	Every Eulerian digraph $G$ on $n$ vertices, $m$ edges and with maximum degree $\Delta$ contains a cycle of length $\Omega\left(\frac{m}{n\log \Delta}\right)$. 
	
\end{corollary}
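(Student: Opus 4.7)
The plan is to derive this as an immediate averaging consequence of Theorem~\ref{thm:main_result}. Applying that theorem to $G$ yields an edge-decomposition $\mathcal{C} = \{C_1, C_2, \ldots, C_t\}$ into $t = O(n \log \Delta)$ edge-disjoint cycles. Since the cycles partition $E(G)$, we have
\[
\sum_{i=1}^{t} |C_i| \;=\; m.
\]
By pigeonhole, there exists some $i$ with $|C_i| \geq m/t = \Omega\!\left(\tfrac{m}{n \log \Delta}\right)$, which is the desired cycle.

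I do not expect any real obstacle: the corollary is purely an averaging argument over the decomposition guaranteed by the main theorem, and requires no additional structural work. The only minor point to mention is that a cycle of length at least one is trivially present whenever $m \geq 1$, so the hidden constant causes no issue in the degenerate regime.
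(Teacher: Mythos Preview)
Your proof is correct and is exactly the intended argument: the paper states the corollary as a ``direct consequence'' of Theorem~\ref{thm:main_result} without spelling out a proof, and the averaging/pigeonhole step you give is precisely what is meant.
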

Our approach connects the problems of finding small cycle decompositions and finding heavy cycles in Eulerian digraphs in the following manner. For any cycle $C$ in a digraph $G$, we define its weight as the sum of inverse out-degrees in $G$ of vertices in $C$. Note that this may equivalently be seen as assigning each arc $(u,v)$ in $G$ the weight $\weight(u,v)=1/\dout(u)$, in which case we clearly have that the sum of out-degrees at every vertex is one. Intuitively, a cycle has a large weight if it is either long or if it consists of small degree vertices. We prove that any digraph with sufficiently large minimum degree contains a cycle of constant weight. Sequentially removing such cycles ensures a decomposition into few cycles. 

The techniques we use can be adapted to general weighted digraphs. They allow us to improve the result of Li and Zhang and show that the upper bound given by Bollob\'as and Scott, and Spencer is asymptotically correct.

\begin{restatable}{theorem}{mainweighted}
	\label{thm:weighted_digraph_heavy_cycle_existence}
	Let $G$ be a digraph with maximum degree $\Delta$ and $\weight \colon E(G)\to \R_+$ be a weight function on the edges such that for every vertex we have $\wout(v)\ge 1$ then there is a cycle $C\subseteq G$ such that \[\weight(C)=\Omega\left(\frac{\log \log \Delta}{\log \Delta}\right).\]
\end{restatable}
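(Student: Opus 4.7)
The plan is to mimic the approach used for Theorem~\ref{thm:main_result}: first pass to a sub-digraph where edge weights are near-uniform via a bucketing argument, and then find a heavy cycle by a branching argument rooted at an arbitrary vertex.

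\emph{Step 1: Bucketing.} For each vertex $v$ I would partition its out-edges into dyadic weight classes $B_k(v) = \{e\in\outneighb(v):\weight(e)\in[2^{-k},2^{-k+1})\}$. Only $O(\log\Delta)$ such classes contribute non-negligibly to $\wout(v)\ge 1$ (edges of weight below roughly $1/\Delta^2$ sum to at most $1/\Delta$ in total), so by pigeonhole some class at $v$ carries out-weight $\Omega(1/\log\Delta)$. Selecting such a class at every vertex produces a sub-digraph $G'$ with (i) $\wout(v)=\Omega(1/\log\Delta)$ at every vertex, and (ii) out-edges incident to any fixed vertex within a factor of two of each other in weight. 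It then suffices to find in $G'$ a cycle of weight $\Omega(\log\log\Delta/\log\Delta)$, a far more structured task.

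\emph{Step 2: Branching argument.} Fix $\epsilon:=c\log\log\Delta/\log\Delta$ with $c$ a small absolute constant and suppose, toward a contradiction, that no cycle in $G'$ has weight $\ge\epsilon$. I would grow a rooted tree from an arbitrary vertex $v_0\in V(G')$, extending at each step by out-edges and tracking the cumulative tree-weight $d(v)$ of each explored vertex $v$. Any back-edge $(u,a)$ to an ancestor $a$ of $u$ produces a cycle of weight $d(u)-d(a)+\weight(u,a)$; under the contradiction hypothesis this is always $<\epsilon$, which places strong restrictions on where $u$'s out-weight is allowed to go, namely to out-edges that extend the tree or to back-edges in a narrow depth-window of size $<\epsilon$. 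The lower bound $\wout(u)=\Omega(1/\log\Delta)$ together with $\dout(u)\le\Delta$ then forces simultaneous lower bounds on the branching factor and depth of the tree; a careful count shows these cannot be balanced within the weight budget unless $c$ is at least a positive absolute constant, giving the desired contradiction.

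The main obstacle is the counting in Step 2, which must deliver exactly the $\log\log\Delta$ factor matching the Bollob\'as--Scott--Spencer construction. This factor comes from a second-order pigeonhole on how the out-weight at each vertex is split between tree-extending edges and back-edges across the levels of the tree, and the argument must handle separately the regime in which the tree is nearly balanced of depth $\Theta(\log\Delta/\log\log\Delta)$ --- the regime in which the construction itself lives. A subsidiary concern is checking that Step 1 loses only a factor $\log\Delta$ in weight (which is cleanly absorbed into the target bound) while still preserving the maximum out-degree $\Delta$, so that the branching account in Step 2 is not spoiled.
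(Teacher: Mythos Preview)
Your approach is quite different from the paper's, and as sketched it has real gaps.

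The paper does not bucket or build a branching tree. It first trims weights so that $\wout(v)=1$, passes to a terminal strongly connected component $G'$, and disposes of any edge heavier than $\tfrac{\log\log\Delta}{50\log\Delta}$ by extending it to a cycle inside $G'$. What remains is a graph with $\weight_{max}\le\tfrac{\log\log\Delta}{50\log\Delta}$, to which Theorem~\ref{thm:weighted_general_weight_cycle_high} is applied with $\lambda=\log\log\Delta$: a \emph{greedy path} is grown, choosing each next vertex to minimise a pair of explicit potential functions $A_P,B_P$ (Definitions~\ref{def:annoyance2} and~\ref{def:bnnoyance}); keeping these potentials bounded along the path forces the resulting cycle to have weight $\Omega(\lambda/e^\lambda)=\Omega(\log\log\Delta/\log\Delta)$. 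The extra $\log\log\Delta$ over Li--Zhang comes precisely from the freedom in the parameter $\lambda$ in that exponential trade-off, not from any tree-counting.

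Your Step~2 is where the proof must happen, and the sketch does not get past the obstacle you yourself flag. Two concrete problems. First, in a rooted exploration of a digraph an out-edge of $u$ need not go to an ancestor or to a fresh vertex: it may be a \emph{cross-edge} to an already-explored non-ancestor, which closes no directed cycle and is invisible to your back-edge depth-window constraint; nothing in your sketch controls how much of $\wout(u)$ is absorbed this way, so the claimed ``simultaneous lower bounds on the branching factor and depth'' do not follow. Second, whatever contradiction you reach must be against a quantity bounded in terms of $\Delta$ alone (the statement makes no reference to $n$, and $n$ may be arbitrarily large relative to $\Delta$), whereas a ``too many vertices in the tree'' count is a bound in $n$. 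Finally, note that on the Bollob\'as--Scott--Spencer construction the out-weights at every vertex are already uniform, so your Step~1 is the identity there and Step~2 faces the full original problem unchanged; the bucketing has not bought you a genuine reduction.
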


\paragraph{Structure of this paper}
This paper is organised as follows. In Section \ref{sec:preliminaries} we introduce general tools and notation. In Section \ref{sec:unweighted_logn_case} we prove slightly weaker versions of Theorems \ref{thm:main_result} and \ref{thm:weighted_digraph_heavy_cycle_existence}. These proofs give the intuition on how we find the cycles using a random walk technique. In Section \ref{sec:weighted_case} we prove Theorem \ref{thm:main_result} and \ref{thm:weighted_digraph_heavy_cycle_existence}. These proofs are more technical and derandomize the techniques used in Section \ref{sec:unweighted_logn_case}.

% !TeX root = long_directed_cycles.tex
% !TeX spellcheck = en_GB 

\section{Preliminaries}
\label{sec:preliminaries}

By $\log$ we denote the natural logarithm. We use standard graph theoretic notation (see e.g.~\cite{D12}). We denote a digraph by $G=(V,E)$ where $V$ is the set of vertices and $E$ is the set of edges (arcs). Given an edge $e=(u,v)$ we say $e$ is an out-edge for $u$ and an in-edge for $v$. For every vertex $v\in V$ we write $\din(v)$ for the in-degree of the vertex $v$, the number of in-edges at the vertex $v$ and $\dout(v)$ for the number of out-edges of $v$. Correspondingly, we write $\Nin(v)$ and $\outneighb(v)$ for the in- and out-neighbourhood of a vertex $v$. If the underlying graph is not clear from the context we add it as a subscript and write e.g.\ $\dout_G(v)$ for clarification. 

For a digraph $G$, we write $\delta(G) \eqdef \min_{v \in V(G)} \{ \din(v), \dout(v) \}$ for its minimum degree and $\Delta(G) \eqdef \max_{v\in V(G)} \{ \din(v), \dout(v) \}$ for its maximal degree. Often, when no confusion can be made, we write $\delta, \Delta$ instead. Furthermore, we write $\bar{\deg}:=|E(G)|/n$ for the average degree of $G$. We call a digraph \emph{sinkless} if \(\dout(v) > 0\) for every vertex \(v\).
We say that a digraph is \emph{balanced} if $\din(v) = \dout(v)$ for every vertex $v$. We make two trivial remarks at this point: an Eulerian digraph is a connected balanced digraph and any digraph obtained by removing a cycle from a balanced digraph is also balanced. 

When talking about weighted graphs, we always consider a weight function $\weight \colon E(G)\to \R_+$. For a vertex $v$, we call the sum of weights of out-edges of $v$ the \emph{out-weight} of that vertex and write it $\wout(v)=\sum_{(v,u)\in E(G)} \weight(v,u)$. For a subgraph $H\subseteq G$ we define the weight of the subgraph to be the sum of all its edges i.e.\ $\weight(H)=\sum_{e\in E(H)}\weight(e)$. We name $\weight_{max} \eqdef \max_{e \in E(G)} \{\weight(e)\}$ the \emph{maximal weight} of the graph.

\begin{definition}\label{def:remdeg}
Given a path $P$, for any vertex $v$ we define the \emph{remaining out-degree} of $v$ as $\drem_P(v) \eqdef \left| \outneighb(v) \setminus V(P) \right|$, %\ML{Should we mention somewhere that \(V(P)\) corresponds to the vertices used in \(P\) or is it obvious enough?}\CK{I think thats fine, P is a graph and this is standart notation}, 
\ie the number of out-neighbours of $v$ in \(G\) but not in \(P\). When talking about weighted graphs, we also define the \emph{remaining out-weight} of $v$ as $\wrem_P(v) \eqdef \sum_{ u \in \outneighb(v) \setminus V(P) }{ \weight(v,u) }$.
\end{definition}
We often consider the sub-paths $(x_0, \dots, x_t)$ of a path $P = (x_0, \dots, x_T)$; we write $\drem_t$, $\wrem_t$ instead of $\drem_{(x_0, \dots, x_t)}$, $\wrem_{(x_0, \dots, x_t)}$.
\vspace{1em}

 One of the key ingredients of our proof is the following lemma that links existence of heavy cycles to cycle decompositions.
 \begin{lemma}
	\label{lem:small_decomp_existence}
	
	 Let $G$ be a balanced \(n\)-vertex digraph and let $\xi,\mu \geq 0$. If every $G' \subseteq G$ with minimum degree $\delta(G') \geq \mu $ contains a cycle $C \subseteq G'$ such that \[\sum_{v \in C}{ \frac{1}{\dout_{G'}(v)} } \geq \xi,\] then there exists a cycle decomposition of $G$ with at most $ n \mu + (1/\xi)n \log \bar{\deg} + O(n/\xi)$ cycles.
\end{lemma}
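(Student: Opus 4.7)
The plan is to decompose $G$ greedily by repeatedly extracting one cycle from the current subgraph $G'\subseteq G$ (initialised to $G$), using two different extraction rules depending on the current minimum degree, and then bounding each type of extraction by a separate potential argument. Throughout the process $G'$ stays balanced, since removing a cycle from a balanced digraph leaves a balanced digraph; in particular every non-isolated vertex of $G'$ still lies on some cycle, so both extraction rules will always succeed. While the subgraph of $G'$ induced on its non-isolated vertices has minimum degree at least $\mu$, I invoke the hypothesis of the lemma to extract a cycle $C$ with $\sum_{v\in C}1/\dout_{G'}(v)\ge \xi$; I call this a \emph{heavy} extraction. Otherwise, I pick any vertex $v$ with $1\le \dout_{G'}(v)<\mu$ and extract any cycle of $G'$ through $v$ (which exists by balancedness); I call this a \emph{cleanup} extraction.

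For the heavy extractions, I would introduce the harmonic potential
\[
\Phi(G')\eqdef \sum_{v\in V(G)} H\bigl(\dout_{G'}(v)\bigr),\qquad H(d)\eqdef \sum_{i=1}^{d}\tfrac{1}{i},
\]
with $H(0)=0$. Because $H(d)-H(d-1)=1/d$, removing any cycle $C$ from $G'$ decreases $\Phi$ by \emph{exactly} $\sum_{v\in C}1/\dout_{G'}(v)$; in particular every heavy extraction drops $\Phi$ by at least $\xi$. The function $H$ is concave (its increments $1/d$ are decreasing), so Jensen's inequality, together with the standard estimate $H(d)\le \log d + 1$, gives $\Phi(G)\le n\log \bar{\deg}+O(n)$. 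Since $\Phi$ is non-negative and monotonically non-increasing along the process, the total number of heavy extractions is at most $(1/\xi)\,n\log \bar{\deg}+O(n/\xi)$.

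For the cleanup extractions, I would use the simpler counter $\Psi(G')\eqdef \sum_{v\in V(G)}\min\{\mu,\dout_{G'}(v)\}$. This satisfies $\Psi(G)\le n\mu$ and is trivially non-increasing under any cycle removal. At a cleanup step the chosen vertex $v$ has $\dout_{G'}(v)\le \mu$, so removing a cycle through $v$ decreases $\dout_{G'}(v)$ by one and hence the corresponding term of $\Psi$ by one. Therefore the total number of cleanup extractions is at most $n\mu$, and summing the two bounds yields the claimed $n\mu+(1/\xi)\,n\log \bar{\deg}+O(n/\xi)$ cycles.

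I do not expect any step to be a serious obstacle, but the key design choice is the potential $\Phi$. It has to (i) drop by exactly the cycle weight $\sum_{v\in C}1/\dout_{G'}(v)$ at each extraction, so that heavy cycles are ``paid for'' by the potential drop, and (ii) be globally bounded by $n\log \bar{\deg}$ rather than by the weaker $n\log \Delta$. The natural candidate $\sum_v \log \dout_{G'}(v)$ nearly works but mis-handles the step where $\dout_{G'}(v)$ drops from $1$ to $0$ (it undercharges the $1/1$ contribution); replacing $\log$ by the harmonic number $H$ repairs this boundary case while retaining the logarithmic Jensen bound.
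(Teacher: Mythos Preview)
Your proposal is correct and follows essentially the same approach as the paper: both argue via the same two-case greedy extraction, bound the ``heavy'' cycles using the identity $\sum_{C}\sum_{v\in C}1/\dout_{G_t}(v)=\sum_v H(\dout_G(v))$ together with Jensen's inequality, and bound the ``cleanup'' cycles by observing that each such removal decrements the out-degree of some vertex currently below $\mu$. The only difference is cosmetic---you package the bookkeeping into explicit potentials $\Phi$ and $\Psi$, whereas the paper performs the same double-counting inline.
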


\begin{proof}
	We construct a cycle decomposition by sequentially taking out cycles: at each step, we pick one and remove it from the graph until it is empty. Because we start from a balanced graph, at each step the remaining graph is also balanced and we can indeed find a cycle. Formally, let  $(G_t)_{t \geq 0}$, $(C_t)_{t \geq 0}$ be two families of subgraphs of $G$ with $G_0 = G.$
	As long as the graph $G_t$ is non-empty, we will remove a cycle $C_{t}$ in the next round. 
\begin{enumerate}
	\item If $\delta(G_t) < \mu$ then $C_{t}$ is a cycle (chosen arbitrarily) of $G_t$ containing a vertex $v$ of minimum degree.
	\item If $\delta(G_t) \geq \mu$ then $C_{t}$ is a cycle such that $\sum_{v \in C_t}{\frac{1}{\dout_{G_t}(v)}} \geq \xi$. 
\end{enumerate}
	We set 
	\[G_{t+1}=G_t-C_{t}\]
	and remove any isolated vertices.
	Let $T$ be the first index such that $G_T = \emptyset$. This means that $\mathcal{C} =\{C_0, \dots, C_{T-1}\}$ is a cycle decomposition of $G$. We show that $T$ cannot be too large. For this, we look separately at the two types of cycles we find and count how often this can occur. Let $\mathcal{C}_1$ be the collection of all the cycles that were chosen in case \(1\), \ie cycles that contain a minimum degree vertex in the remaining graph. Similarly, let $\mathcal{C}_2$ be the collection of cycles that were chosen in case \(2\). Then clearly $\mathcal{C}=\mathcal{C}_1\cup \mathcal{C}_2$.
	
	The first case occurs if there is a vertex of degree at most $\mu$ in the remaining graph. In this case we can be sure to reduce the degree of this vertex by 1. Note that this case can happen at most $\mu$ times for a particular vertex $v$ being the minimum degree vertex. We conclude that this happens at most $n \mu$ times in total and therefore 
	\[|\mathcal{C}_1|\le n \mu.\]

	 We now upperbound the number of cycles in \(\mathcal{C}_2\). Given how cycles of \(\mathcal{C}_2\) were chosen, we clearly have \[ \xi |\mathcal{C}_2|\le  \sum_{ C_{t} \in \mathcal{C}_2}\hspace{0.5em} \sum_{v \in C_{t}}{ \frac{1}{\dout_{G_t}(v)} } \le \sum_{ C_{t} \in \mathcal{C}}\hspace{0.5em} \sum_{v \in C_{t}}{ \frac{1}{\dout_{G_t}(v)} } . \numberthis\label{eq:cycle_decomp_1}\] 
	Every time a cycle $C \in \mathcal{C}$ is removed, each vertex $v$ in it contributes to the right-hand side of \eqref{eq:cycle_decomp_1} by the inverse of its current out-degree $1/\dout_{G_t}(v)$. 	As we continue the process until there are no edges left we know that the total contribution of a vertex $v$ is exactly $\sum_{i=1}^{\dout_G(v)}1/i$. By a double counting argument we get
	
	\[\sum_{ C_{t}\in \mathcal{C}}\hspace{0.5em}{ \sum_{v \in C_{t}}{ \frac{1}{\dout_{G_t}(v)}}}=\sum_{v \in V(G)}{ \sum_{ i=1 }^{\dout_G(v)}{\frac{1}{i}} }.\numberthis\label{eq:cycle_decomp_2}\]
	%As every cycle has exactly one out-edge at each of its vertices this number only decreases by one each time we remove a cycle containing $v$. 
%It is a well known fact that \(\sum_{i=1}^{m}{1/i} = \log m + O(1)\), so there exists a constant \(c\) (that depends on neither of \(n, \xi, \mu\)) such that for all \(m\), \(\sum_{i=1}^{m}{1/i} \le \log m + c\). 
Combining \eqref{eq:cycle_decomp_1} with \eqref{eq:cycle_decomp_2} and using Jensen inequality for $\log$, we have \begin{align*}
		\xi |\mathcal{C}_2| 
			&\leq \sum_{v \in V(G)}{ \sum_{ i=1 }^{\dout_G(v)}{\frac{1}{i}} } = \sum_{v \in V(G)}{ \left(\log \dout_G(v) + O(1) \right) }
			\leq n \log \bar{\deg} + O(n) .
	\end{align*}
	We conclude 
	\[|\mathcal{C}_2| \leq ({1}/{\xi}) n \log \bar{\deg} + O(n/ \xi),\] and therefore \[T = |\mathcal{C}_1|+|\mathcal{C}_2| \le n \mu + (1/\xi)n \log \bar{\deg} + O(n/\xi).\]
\end{proof}

As a second tool, we have the following lemma, which can be seen as a large deviation result for sums of (not necessarily independent) Bernoulli random variables. This will play a central roll in Section \ref{sec:unweighted_logn_case}.

\begin{lemma} \label{lem:adversarial_probabilities} 
	Let $X_1,\ldots,X_n\in\{0,1\}$ be any stochastic process, and let $p_1,\ldots,p_n \in [0,1]$ be a sequence of random variables such that, for every $t\in [n]$, we have
\begin{equation}\label{eq:wizardcondition}\E[X_t\vert X_1,\ldots,X_{t-1},p_1,\ldots,p_t]=p_t.\end{equation}
	Then, for any $\lambda>0$ and any c we have \[\Pr\left[\lambda \cdot \sum_{t=1}^nX_t>(e^\lambda - 1)\cdot \sum_{t=1}^np_t +c\right]\le e^{-c}.\]
\end{lemma}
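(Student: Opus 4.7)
The plan is to mimic the standard proof of the Chernoff bound, but with the Laplace transform computed step by step against the predictable probabilities $p_t$ rather than against deterministic parameters. Concretely, I would introduce the filtration $\mathcal{F}_t = \sigma(X_1,\ldots,X_t,p_1,\ldots,p_{t+1})$, so that condition \eqref{eq:wizardcondition} reads $\E[X_t \mid \mathcal{F}_{t-1}] = p_t$, and then construct the exponential supermartingale
\[
M_t \eqdef \exp\!\left( \lambda \sum_{s=1}^{t} X_s - (e^\lambda - 1) \sum_{s=1}^{t} p_s \right), \qquad M_0 \eqdef 1.
\]

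The first step is to bound the one-step conditional moment generating function. Since $X_t \in \{0,1\}$ and $\E[X_t \mid \mathcal{F}_{t-1}] = p_t$,
\[
\E\!\left[ e^{\lambda X_t} \,\middle|\, \mathcal{F}_{t-1} \right] \;=\; p_t e^{\lambda} + (1-p_t) \;=\; 1 + p_t(e^{\lambda}-1) \;\le\; \exp\!\bigl(p_t (e^{\lambda}-1)\bigr),
\]
where the last inequality is just $1+x \le e^x$. Since $p_t$ is $\mathcal{F}_{t-1}$-measurable, multiplying by $\exp(-(e^{\lambda}-1) p_t)$ and taking conditional expectation gives $\E[M_t \mid \mathcal{F}_{t-1}] \le M_{t-1}$, so $(M_t)$ is indeed a supermartingale with $\E[M_n] \le \E[M_0] = 1$.

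The second step is a one-line Markov inequality: the event in the lemma is exactly $\{M_n > e^{c}\}$, so
\[
\Pr\!\left[ \lambda \sum_{t=1}^{n} X_t - (e^{\lambda}-1) \sum_{t=1}^{n} p_t > c \right] \;=\; \Pr[M_n > e^{c}] \;\le\; e^{-c}\, \E[M_n] \;\le\; e^{-c}.
\]

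I do not foresee a real obstacle: the only point that needs a moment of care is the choice of filtration, namely that $p_t$ must be included in $\mathcal{F}_{t-1}$ in order to (i) make sense of hypothesis \eqref{eq:wizardcondition} and (ii) legitimately pull $\exp(-(e^\lambda-1) p_t)$ out of the conditional expectation when checking the supermartingale property. Once the filtration is set up correctly, the argument is essentially the textbook exponential-martingale derivation of the Chernoff bound, with the Bernoulli parameters allowed to depend adaptively on the past.
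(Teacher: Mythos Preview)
Your proof is correct and follows essentially the same route as the paper: apply Markov's inequality to the exponential $\exp\bigl(\lambda\sum X_t-(e^\lambda-1)\sum p_t\bigr)$ and then use the one-step bound $p_t e^\lambda+(1-p_t)\le e^{(e^\lambda-1)p_t}$ to show this exponential has expectation at most~$1$. The only cosmetic difference is that you phrase the iteration as a supermartingale with respect to an explicit filtration, whereas the paper writes it out as an induction on $n$ with the tower property.
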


%It is a concentration result on a sum of a series of Bernoulli variables. However, unlike the normal setting, the probabilities for these Bernoulli variables are not fixed in advance but decided `on the go'.

We can illustrate the process in the statement of Lemma \ref{lem:adversarial_probabilities} as follows. Suppose you play a game against a magician, which goes on for $n$ rounds. In %every round
each round $i=1, 2, \dots, n$ you throw a coin, whose outcome $X_i$ %\CK{is this more clear?} 
is either $0$ or $1$. %\AM{added definition of $i$ in text. think Cs edit is good} 
Before each throw, the magician uses their magic powers to decide the probability $p_i$ that the coin shows up with $1$. For this, the magician must pay you $p_i$ Swiss Francs, but in return you must pay them one Swiss Franc for every time the coin shows a $1$. The conclusion is, no matter how the magician picks the probabilities, they are unlikely to win much more than they invested. 

The proof of this lemma goes along the lines of the proof of Chernoff's inequality (see e.g.~\cite{FK16}).

\begin{proof}[Proof of Lemma \ref{lem:adversarial_probabilities}]
	Note that we have
	\begin{align*}
	\Pr\left[\lambda \cdot \sum_{t=1}^n X_t>(e^\lambda -1)\cdot \sum_{t=1}^t p_t +c \right]&= \Pr\left[\lambda \cdot \sum_{t=1}^n X_t - (e^\lambda -1)\cdot\sum_{t=1}^n p_t  > c\right]\\
	&=\Pr\left[e^{\left(\lambda \cdot\sum_{t=1}^n X_t - (e^\lambda -1)\cdot\sum_{t=1}^n p_t \right)} > e^{ c}\right].
	\end{align*}
	By Markov's inequality we know that 
	\[\Pr\left[e^{\left(\lambda\cdot\sum_{t=1}^n X_t - (e^\lambda-1)\sum_{t=1}^n p_t \right)} > e^c\right]\le\frac{\E\left[e^{\left(\lambda\cdot\sum_{t=1}^n X_t - (e^\lambda-1)\sum_{t=1}^n p_t \right)}\right]}{e^{ c}}. \]
	In the following we show that \[\E\left[e^{\left(\lambda\cdot\sum_{t=1}^n X_t - (e^\lambda-1)\sum_{t=1}^n p_t \right)}\right]\le 1\] which finishes the proof of the lemma.	We prove this by induction on $n$. 
	Observe that for any $t$ we have
	\begin{align*}
	\E\left[e^{\lambda\cdot X_t-(e^\lambda-1)\cdot p_t}\vert X_1,\ldots,X_{t-1},p_1,\ldots,p_t\right] &= p_t e^{\lambda-(e^\lambda-1)\cdot p_t}+(1-p_t)e^{-(e^\lambda-1)\cdot p_t}\\
	&=e^{-(e^\lambda-1)\cdot p_t}\left(1+p_t(e^\lambda-1)\right)\\
	&\le e^{-(e^\lambda-1)\cdot p_t}\cdot e^{p_t(e^\lambda-1)}\\
	&= 1.
	\end{align*}
	The base case follows directly from setting $t=1$. For the induction step note that
	\begin{align*}
	&\E\left[e^{\left(\lambda\cdot \sum_{t=1}^n X_t - (e^\lambda-1)\cdot \sum_{t=1}^n p_t \right)}\right]\\ &\qquad= \E\left[\E\left[e^{\left(\lambda\cdot \sum_{t=1}^n X_t - (e^\lambda-1)\cdot \sum_{t=1}^n p_t \right)}\vert X_1,\ldots,X_{n-1},p_1,\ldots,p_n\right]\right]\\
	&\qquad=\E\left[e^{\left(\lambda\cdot \sum_{t=1}^{n-1} X_t - (e^\lambda-1)\cdot \sum_{t=1}^{n-1} p_t \right)}\E\left[e^{\left( \lambda\cdot X_n - (e^\lambda-1)\cdot p_n \right)}\vert X_1,\ldots,X_{n-1},p_1,\ldots,p_n\right]\right]\\
	&\qquad\le\E\left[e^{\left(\lambda\cdot \sum_{t=1}^{n-1} X_t - (e^\lambda-1)\cdot \sum_{t=1}^{n-1} p_t \right)}\cdot 1\right] \\
	&\qquad=\E\left[e^{\left(\lambda\cdot \sum_{t=1}^{n-1} X_t - (e^\lambda-1)\cdot \sum_{t=1}^{n-1} p_t \right)}\right]\\
	&\qquad\le 1
	\end{align*}
	where in the last step we use our induction hypothesis for $n-1$.
\end{proof}

% !TeX root = long_directed_cycles.tex
% !TeX spellcheck = en_GB 
\section{A randomized construction}

\label{sec:unweighted_logn_case}

In this section we introduce our strategy and ideas to finding heavy cycles and show how this helps find a cycle decomposition with few cycles. We prove neither Theorem~\ref{thm:main_result} nor Theorem~\ref{thm:weighted_digraph_heavy_cycle_existence} here. Those theorems are proved in Section~\ref{sec:weighted_case}, where we refine our ideas and make them more technical. 
We believe that the core idea is nonetheless simple and adaptable to other problems.

\begin{theorem}
	\label{thm:main_theorem_weak_version}
	Every Eulerian digraph on $n$ vertices can be decomposed into $O(n \log n)$ edge-disjoint cycles.
\end{theorem}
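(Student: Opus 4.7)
The plan is to apply Lemma~\ref{lem:small_decomp_existence} with $\mu = \Theta(\log n)$ and $\xi$ a positive absolute constant.  Since $\log \bar{\mathrm{d}} \leq \log n$, the resulting bound is at most $n\mu + (1/\xi) n\log \bar{\mathrm{d}} + O(n/\xi) = O(n\log n)$.  What remains is therefore the following heavy-cycle lemma: every digraph $G'$ with $\delta(G') \geq C\log n$ contains a cycle $C$ with $\sum_{v \in C} 1/\dout_{G'}(v) \geq \xi$ for absolute constants $C,\xi>0$.

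To produce such a cycle I would perform a uniform random walk $x_0, x_1, \ldots$ in $G'$, starting at an arbitrary vertex $x_0$ and picking each $x_{t+1}$ uniformly in $\outneighb_{G'}(x_t)$, halting at the first step $T$ such that $x_T = x_J$ for some $J<T$.  The extracted cycle is $(x_J,\ldots,x_{T-1})$ and its weight is $W_C = \sum_{t=J}^{T-1} 1/\dout_{G'}(x_t)$.  The analysis rests on Lemma~\ref{lem:adversarial_probabilities}: writing $X_t$ for the indicator that the walk closes at step $t$ and $p_t$ for its conditional probability given the history, we have $\sum_t X_t = 1$ deterministically, while $p_t \leq (t-1)/\mu$, since at most $t-1$ of the $\geq \mu$ out-neighbours of $x_{t-1}$ can lie on the current path.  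With $\lambda = 1$ and a suitable constant $c$ the magician lemma then implies that $\sum_t p_t = \Omega(1)$ with positive probability, and hence $T = \Omega(\sqrt{\mu})$.

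The main obstacle is to turn these estimates into the required \emph{weighted} statement $W_C = \Omega(1)$: the naive bound $W_C \geq T/\Delta(G')$ is only useful when $G'$ is sparse.  To handle denser subgraphs, the idea is to use Lemma~\ref{lem:adversarial_probabilities} more carefully, by coupling each closing probability $p_t$ to the weight $1/\dout_{G'}(x_{t-1})$ contributed at step $t$, so that a lower bound on $\sum_t p_t$ also forces a lower bound on $\sum_{t<T} 1/\dout_{G'}(x_{t-1})$.  An averaging step over the random rotation of the starting vertex (equivalently, a random shift of the cycle within the walk) should then transfer this constant lower bound from the whole walk to the cycle suffix, yielding $W_C = \Omega(1)$.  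I expect this coupling-plus-rotation step to be the most technical part of the argument, and to be precisely the place where the bound $\mu = \Theta(\log n)$ is used: it is large enough that the magician lemma kicks in with positive probability, and small enough that the $n\mu$ term in Lemma~\ref{lem:small_decomp_existence} remains $O(n\log n)$.
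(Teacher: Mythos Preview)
Your reduction via Lemma~\ref{lem:small_decomp_existence} with $\mu=\Theta(\log n)$ and constant $\xi$ is exactly the paper's plan, so everything hinges on the heavy-cycle claim (the paper's Proposition~\ref{prop:weight_cycle_existence}). There the proposal has a genuine gap.

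Your random walk---step uniformly in $N^+(x_t)$ and halt at the first repeat---does not in general find a cycle of constant weight, and the ``coupling-plus-rotation'' fix you outline cannot save it. On the complete digraph on $d$ vertices (so $\delta=d-1\gg\log n$), the first self-intersection occurs after $\Theta(\sqrt d)$ steps by the birthday bound, so your cycle has weight $\Theta(1/\sqrt d)=o(1)$, even though a Hamiltonian cycle has weight $\approx 1$. More structurally, your application of Lemma~\ref{lem:adversarial_probabilities} yields only $\sum_t p_t=\Omega(1)$, but $p_t$ equals $(\text{visited out-neighbours of }x_{t-1})/\dout(x_{t-1})$, which can exceed $1/\dout(x_{t-1})$ by an unbounded factor; so a lower bound on $\sum p_t$ says nothing about $\sum 1/\dout(x_t)$, and no rotation of the starting vertex changes this.

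The paper changes both the walk and the way Lemma~\ref{lem:adversarial_probabilities} is used. The walk is \emph{self-avoiding}: $x_{t+1}$ is chosen uniformly among the \emph{unvisited} out-neighbours of $x_t$, and one stops at the first $T$ with $\drem_T(x_T)<\tfrac12\dout(x_T)$; the cycle is then closed by the arc from $x_T$ to its \emph{earliest} out-neighbour $x_s$ on the path. Lemma~\ref{lem:adversarial_probabilities} is applied not to closing indicators but, for each fixed vertex $v$, to $X_t=\mathbf 1_{\{x_t\in N^+(v)\}}$; since each step is uniform over at least $\tfrac12\dout(x_{t-1})$ vertices, $p_t\le 2\dout(v)/\dout(x_{t-1})$, and a union bound gives
\[
|N^+(v)\cap\{x_s,\dots,x_T\}|\le 1+C\,\dout(v)\sum_{t=s}^{T-1}\frac{1}{\dout(x_t)}+O(\log n)
\]
simultaneously for all $v,s$ with high probability (Lemma~\ref{lem:visited_neighbours_bound}). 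Taking $v=x_T$, the stopping rule makes the left side at least $\tfrac12\dout(x_T)$, and the assumption $\dout(x_T)\ge K_0\log n$ absorbs the $O(\log n)$ term; rearranging yields $\sum_{t=s}^{T}1/\dout(x_t)=\Omega(1)$. The self-avoiding rule and the half-degree stopping criterion are precisely what forces the walk on $K_d$ to run for $\Theta(d)$ steps rather than $\Theta(\sqrt d)$.
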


It is easy to see that every Eulerian digraph can be decomposed into edge-disjoint cycles by sequentially taking out cycles. The challenge is to choose these cycles cleverly so that we do not need too many. To prove Theorem~\ref{thm:main_theorem_weak_version}, we proceed as follows: first we add a weighting to the digraph such that every edge $e = (u, v)$ receives weight $\weight(e) = 1/\dout(u)$. We then show that, regardless of how we decompose the graph, the sum of weights of the cycles is bounded; if all cycles in our decomposition have weight sufficiently large, then there cannot be too many of them. The key ingredient is therefore to find heavy cycles. Proposition~\ref{prop:weight_cycle_existence} below shows that this is indeed possible.

\begin{proposition}
	\label{prop:weight_cycle_existence}
	There exist positive constants $K_0$, $K_1$ such that every digraph $G$ of order $n$ with minimum degree $\delta(G) \geq K_0 \cdot \log n$ contains a cycle $C$ such that \begin{align*}
			\sum_{v \in C}{ \frac{1}{\dout(v)} } \geq K_1.
	\end{align*}
\end{proposition}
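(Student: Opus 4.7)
My plan is to construct the desired cycle via a uniformly random walk on \(G\) and argue, using Lemma~\ref{lem:adversarial_probabilities}, that with probability bounded away from zero the cycle closed by this walk has weight at least a fixed constant \(K_1\). Concretely, start the walk at an arbitrary vertex \(x_0\) and, at each step \(t\), let \(x_{t+1}\) be a uniformly random element of \(\outneighb(x_t)\); stop at the first index \(L\) for which \(x_L\in\{x_0,\ldots,x_{L-1}\}\), and let \(s\) be the unique index with \(x_s=x_L\). The resulting cycle \(C=(x_s,x_{s+1},\ldots,x_{L-1},x_s)\) has weight \(w(C)=\sum_{i=s}^{L-1}1/\dout(x_i)\), and my goal is to show \(w(C)\ge K_1\) with positive probability.

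The main analytic tool is Lemma~\ref{lem:adversarial_probabilities}, applied to the Bernoulli process \(X_t=\mathbf{1}[x_{t+1}\in\{x_0,\ldots,x_t\}]\) with conditional probabilities \(p_t=|\{x_0,\ldots,x_t\}\cap\outneighb(x_t)|/\dout(x_t)\). Since the walk closes exactly one cycle, \(\sum_tX_t=1\) deterministically, so for suitable constants \(\lambda, c\) the lemma yields \(\sum_tp_t\ge c_0\) for some absolute \(c_0>0\), with probability bounded away from zero. A direct calculation rewrites this sum as the total back-edge weight of the walk, i.e.\ the sum over all pairs \(s\le t<L\) with \((x_t,x_s)\in E\) of \(1/\dout(x_t)\). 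The minimum-degree hypothesis \(\delta\ge K_0\log n\) bounds \(p_t\le (t+1)/(K_0\log n)\), which, combined with \(\sum_tp_t\ge c_0\), forces the walk to survive for at least \(\Omega(\sqrt{\log n})\) steps before the first revisit.

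The main obstacle, and the crux of the argument, is translating this lower bound on the \emph{total} back-edge weight into a lower bound on the weight of a \emph{single} cycle. A naïve averaging over all back-edges would yield only \(\Omega(1/L)=\Omega(1/\log n)\) per cycle, which merely recovers the bound of Li and Zhang. To gain the missing logarithmic factor, I would exploit that under the min-degree hypothesis each \(p_t\) is small, so the walk is very unlikely to close via a \emph{short} cycle; consequently the cycle \(C\) must span a substantial portion of the walk and inherit a constant fraction of the walk's accumulated weight \(\sum_{t<L}1/\dout(x_t)\). Making this quantitative will likely require a second application of Lemma~\ref{lem:adversarial_probabilities}, this time to a discretization of the walk's weight increments, in order to establish concentration of \(w(C)\) around its expectation and complete the proof.
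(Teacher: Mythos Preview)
Your proposal contains a genuine gap that you yourself flag but do not close. The lower bound you extract from Lemma~\ref{lem:adversarial_probabilities} is on the quantity
\[
\sum_{t<L} p_t \;=\; \sum_{\substack{0\le s\le t<L \\ (x_t,x_s)\in E}} \frac{1}{\dout(x_t)},
\]
the total \emph{back-edge} weight of the path, whereas the weight of the closed cycle is $\sum_{i=s}^{L-1} 1/\dout(x_i)$ for the single index $s$ where the walk returns. Your proposed remedy---arguing that the walk survives $\Omega(\sqrt{\log n})$ steps and therefore the cycle ``inherits a constant fraction of the walk's accumulated weight $\sum_{t<L}1/\dout(x_t)$''---does not go through: even if the cycle were the entire path you would still need $\sum_{t<L}1/\dout(x_t)=\Omega(1)$, and nothing you have established gives this. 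From $p_t\le (t+1)/\dout(x_t)\le L/\dout(x_t)$ one gets only $\sum_t 1/\dout(x_t)\ge c_0/L$, which is useless since $L$ may be of order $n$. A second application of the lemma ``to establish concentration of $w(C)$ around its expectation'' would not help either, because you have no lower bound on $\E[w(C)]$ to concentrate around.

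The paper's argument supplies the two ideas you are missing. First, it runs a \emph{self-avoiding} walk (Definition~\ref{def:easy_case_random_walk}): $x_{t+1}$ is sampled uniformly from the \emph{unvisited} out-neighbours of $x_t$, and one stops not at the first repeat but at the first time $T$ with $\drem_T(x_T)<\tfrac12\dout(x_T)$. This stopping rule guarantees that the cycle $(x_s,\dots,x_T)$, with $x_s$ the earliest out-neighbour of $x_T$ on the path, already contains at least $\dout(x_T)/2$ out-neighbours of $x_T$. Second, Lemma~\ref{lem:adversarial_probabilities} is applied not to the revisit indicator but, for each fixed vertex $v$, to $X_t^{(v)}=\mathbf{1}[x_t\in\outneighb(v)]$; since the walk is self-avoiding with $\drem\ge\dout/2$, one has $p_t^{(v)}\le 2\dout(v)/\dout(x_{t-1})$, and after a union bound this yields Lemma~\ref{lem:visited_neighbours_bound}: with high probability $|\outneighb(v)\cap\{x_s,\dots,x_T\}|\le 4\,\dout(v)\sum_{t=s}^{T-1}1/\dout(x_t)+O(\log n)$ for all $v,s$. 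Taking $v=x_T$ and comparing with the lower bound $\dout(x_T)/2$ from the stopping rule gives
\[
\sum_{t=s}^{T-1}\frac{1}{\dout(x_t)}\;\ge\;\frac18-\frac{O(\log n)}{\dout(x_T)}\;\ge\;\frac18-\frac{O(1)}{K_0},
\]
which is the desired constant once $K_0$ is large. The crucial point is that the $O(\log n)$ error term is divided by $\dout(x_T)\ge\delta\ge K_0\log n$; this is precisely where the minimum-degree hypothesis enters, and your outline has no analogous step.
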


The proof of Theorem~\ref{thm:main_theorem_weak_version} follows directly from Proposition~\ref{prop:weight_cycle_existence} and Lemma~\ref{lem:small_decomp_existence}.
In the rest of this section we prove Proposition~\ref{prop:weight_cycle_existence}. The central idea of the proof is to consider the following random walk.

\begin{definition}
	\label{def:easy_case_random_walk}
	Given a sinkless digraph $G$, we produce a \emph{random path} $(x_t)_{t \geq 0}$ on $G$ as follows: \begin{itemize}
		\item The first vertex $x_0$ is chosen arbitrarily.
		\item At step $t \geq 0$, if $\drem_t(x_t) \geq \frac{1}{2} \dout(x_t)$, we choose $x_{t+1}$ \uar among the unvisited neighbours of $x_t$.
		\item At step $t \geq 0$, if $\drem_t(x_t) < \frac{1}{2} \dout(x_t)$, we stop the path. We name $T$ the time at which the path stops.
		
	\end{itemize}
\end{definition}

No vertex is visited twice so it is justified that we call it a path. In particular, this implies that $T < n$. To construct a cycle, we connect the last vertex $x_T$ of this path to its first neighbour $x_s$ in the path. In what follows, we show that, for suitable digraphs $G$, the cycle $C = (x_s, \ldots, x_T)$ produced in this manner satisfies the conclusion of Proposition~\ref{prop:weight_cycle_existence} w.h.p.\footnote{We say that a sequence of events $E_1, E_2, \dots$ holds \emph{with high probability} (or w.h.p.\ for short) if $\Pr\left[ E_n\right] \rightarrow 1$ as $n\rightarrow\infty$.} For this, we first prove that the visited out-neighbourhood of any vertex between two steps cannot be too large w.h.p.

\begin{lemma}
	\label{lem:visited_neighbours_bound}

	For a sinkless digraph $G$, let $(x_0, \ldots, x_T)$ be a random path and let $\lambda>0$. With probability at least $1-1/n$, it holds for all $v \in G$ and all $s < T$ that \begin{align*}
		 \left| \left\{ \outneighb(v) \cap \{x_s, \dots, x_T\} \right\} \right| \leq 1+\frac{2e^\lambda-2}{\lambda} \cdot \dout(v) \cdot \sum_{t=s}^{T-1}{\frac{1}{\dout(x_t)}} + \frac{3}\lambda \cdot \log n.
	\end{align*}
\end{lemma}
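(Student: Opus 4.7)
The plan is to apply the adversarial Chernoff-type bound (Lemma \ref{lem:adversarial_probabilities}) to a carefully indexed indicator process, and then to take a union bound over the choices of $v$ and $s$. Fix $v \in V(G)$ and $s \in \{0, 1, \ldots, n-1\}$. For every $s \leq t \leq n-1$, I would define $X_t := \mathbf{1}[t < T \text{ and } x_{t+1} \in \outneighb(v)]$. Since no vertex is ever visited twice along the random path, its length $T$ is at most $n$, so this makes sense as a finite $\{0,1\}$-valued process, and
$$|\outneighb(v) \cap \{x_s, \ldots, x_T\}| \leq \mathbf{1}[x_s \in \outneighb(v)] + \sum_{t=s}^{n-1} X_t \leq 1 + \sum_{t=s}^{n-1} X_t.$$

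The crucial step is to control the conditional probabilities $p_t$ produced by the walk. Whenever $t < T$, the vertex $x_{t+1}$ is chosen uniformly among the $\drem_t(x_t)$ unvisited out-neighbours of $x_t$, and the walk continues only if $\drem_t(x_t) \geq \tfrac{1}{2} \dout(x_t)$. Combined with the trivial estimate $|\outneighb(x_t) \cap \outneighb(v)| \leq \dout(v)$, this yields $p_t \leq 2\dout(v)/\dout(x_t)$ for $t < T$, and setting $p_t = 0$ for $t \geq T$ trivially preserves the hypothesis of Lemma \ref{lem:adversarial_probabilities}. Applying that lemma to the process $(X_s, \ldots, X_{n-1})$ with parameter $\lambda$ and $c = 3 \log n$, I obtain with probability at least $1 - n^{-3}$ that
$$\sum_{t=s}^{n-1} X_t \leq \frac{e^\lambda - 1}{\lambda} \sum_{t=s}^{T-1} p_t + \frac{3 \log n}{\lambda} \leq \frac{2(e^\lambda - 1)}{\lambda}\, \dout(v) \sum_{t=s}^{T-1} \frac{1}{\dout(x_t)} + \frac{3 \log n}{\lambda}.$$

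Finally, a union bound over the at most $n \cdot n = n^2$ pairs $(v,s)$ pushes the failure probability up to $n^2 \cdot n^{-3} = 1/n$, which gives the statement of the lemma. I do not anticipate a serious obstacle: the random stopping time is handled by padding the indicator sequence with zeros, and the factor $2$ appearing in the bound on $p_t$ is exactly produced by the "at least half the out-neighbours still unvisited" rule built into the definition of the random walk. The only real choice in the argument is the calibration $c = 3 \log n$, dictated by the $n^2$ pairs to be union-bounded over.
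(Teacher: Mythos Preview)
Your proposal is correct and follows essentially the same approach as the paper: define indicator variables for landing in $\outneighb(v)$, bound the conditional probability by $2\dout(v)/\dout(x_t)$ using the stopping rule $\drem_t(x_t)\geq \tfrac12\dout(x_t)$, apply Lemma~\ref{lem:adversarial_probabilities} with $c=3\log n$, and union bound over the $n^2$ pairs $(v,s)$. The only differences are cosmetic: the paper indexes $X_t$ by the \emph{landing} vertex $x_t$ rather than the step $t\to t+1$, and it spells out the tower-property verification of \eqref{eq:wizardcondition} explicitly, whereas you absorb this into the phrase ``trivially preserves the hypothesis.''
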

\begin{proof}

	Before formally proving it, we give an intuition of why this is true. Let $v$ be a fixed vertex in $G$, and consider the number of out-neighbours of $v$ that are contained in the path. If at time $t$ the random path has visited $x_0, \dots, x_{t}$ and does not stop yet, \ie $\drem_t(x_t) \geq \frac{1}{2} \dout(x_t)$, then we have\[\Pr[ x_{t+1}\in \outneighb( v)] = \frac{\left| \outneighb(v) \cap \outneighb(x_t) \setminus \{ x_0, \dots, x_{t} \} \right|}{\left| \outneighb(x_t) \setminus \{ x_0, \dots, x_t \} \right|} \leq 2\frac{\dout(v)}{\dout(x_t)}.\] Because we select a random path, we expect the number of visited vertices in the out-neighbourhood of $v$ to be concentrated around its expectation, that is, not too different from $\sum_t{ {\dout(v)}/{\dout(x_t)}}$. Thus, for a typical vertex $v$ we would expect the random path to enter its out-neighbourhood not much more than $2\dout(v)\sum_{t=s}^{T-1} {1}/{\dout(x_t)}$ times after time $s$.
	
Formally, for a given $v \in G$ and for $1 \leq t \leq n$ we define the random variables \[X^{(v)}_t \eqdef \indicator{t \leq T \land x_t \in \outneighb(v)}\text{ and }p^{(v)}_t \eqdef \indicator{t \leq T} \cdot \Pr \left[ x_t \in \outneighb(v) \mid x_0, \dots, x_{t-1} \right].\]

In what follows we wish to apply Lemma~\ref{lem:adversarial_probabilities} to these quantities. Hence, we need to check that the conditions of the lemma, in particular \eqref{eq:wizardcondition}, are satisfied. To do so, we observe that, for any $t$, we have \begin{align*}
		&\E \left[ X^{(v)}_t \mid X^{(v)}_1, \dots, X^{(v)}_{t-1}, p^{(v)}_1, \dots, p^{(v)}_t \right] \\
			&\quad = \E \biggl[ \E \bigl[ X^{(v)}_t \bigm| X^{(v)}_1, \dots, X^{(v)}_{t-1}, p^{(v)}_1, \dots, p^{(v)}_t, x_0, \dots, x_{t-1} \bigr] \biggm| X^{(v)}_1, \dots, X^{(v)}_{t-1}, p^{(v)}_1, \dots, p^{(v)}_t \biggr] \\
			&\quad = \E \biggl[ \E \bigl[ X^{(v)}_t \mid x_0, \dots, x_{t-1} \bigr] \biggm| X^{(v)}_1, \dots, X^{(v)}_{t-1}, p^{(v)}_1, \dots, p^{(v)}_t \biggr] \\
			&\quad = \E \biggl[ p^{(v)}_t \biggm| X^{(v)}_1, \dots, X^{(v)}_{t-1}, p^{(v)}_1, \dots, p^{(v)}_t \biggr] \\
			&\quad = p^{(v)}_t,
	\end{align*}
	as desired.
	
To apply Lemma~\ref{lem:adversarial_probabilities}, we first note that, for any vertex $v$ and any $s \geq 0$, we have on the one hand that \begin{equation}\label{eq:visited_neighbours_bound_proof_2}
\left| \outneighb(v) \cap \{x_s, \dots, x_T\} \right| \leq 1 + \left| \outneighb(v) \cap \{x_{s+1}, \dots, x_T\} \right| = 1+\sum_{t=s+1}^{n}{X^{(v)}_t} 
	\end{equation} where we interpret $\{x_s, \dots, x_T\}$ as the empty set when $s > T$. On the other hand, as we noted above,
		$p^{(v)}_t \leq 2{\dout(v)}/{\dout(x_{t-1})}$
for any $1\leq t \leq T$ and, by definition, $p^{(v)}_t=0$ for $t>T$. It follows that, for any $s\geq 0$ we have 
	\begin{equation}\label{eq:visited_neighbours_bound_proof_1}
		\sum_{t=s+1}^{n}{ p_t^{(v)} } \leq 2 \sum_{t=s}^{T-1}{ \frac{\dout(v)}{\dout(x_t)} }.
	\end{equation}
By Lemma~\ref{lem:adversarial_probabilities}, we have for any vertex $v$ and any $s \geq 0$ that \begin{align*}
		\Pr \left[ \sum_{t=s+1}^n{ X^{(v)}_t } > \frac{e^\lambda-1}\lambda \sum_{t=s+1}^n{p^{(v)}_t} + \frac{3}{\lambda} \log n \right] \leq n^{-3}.
	\end{align*}
By combining \eqref{eq:visited_neighbours_bound_proof_2} and \eqref{eq:visited_neighbours_bound_proof_1}, it follows that
	\begin{align*}
 		\Pr \left[ \left| \outneighb(v) \cap \{x_{s}, \dots, x_T\} \right| > 1 + \frac{2e^\lambda-2}\lambda \dout(v) \sum_{t=s}^{T-1}{\frac{1}{\dout(x_t)}} + \frac{3}{\lambda} \log n \right] \leq n^{-3}.
 	\end{align*}
 	By union bound over all $v$, $s$, we obtain the claim.
\end{proof}

Recall that Proposition~\ref{prop:weight_cycle_existence} states that provided a digraph has minimum degree large enough, we can find a heavy enough cycle. We are now well equipped to prove this lemma.

\begin{proof}[Proof of Proposition~\ref{prop:weight_cycle_existence}]
	Note that $\delta(G)>0$ ensures that $G$ is sinkless.
	In this proof we use Lemma~\ref{lem:visited_neighbours_bound} to show that with positive probability, the weight of the created cycle is high enough.
		
	Let $G$ be a graph of order $n$ with $\delta \geq K_0 \cdot \log n$. Let $(x_0, \ldots, x_T)$ be a random path and let $x_s$ be the first neighbour of $x_T$ in the path so that $(x_s, \ldots, x_T)$ is a cycle. By definition, $x_T$ has at least $\dout(x_T) / 2$ out-neighbours in the path, $x_s$ being the first one. Hence \begin{align}
		\frac{1}{2} \dout(x_T) \leq \left| \outneighb(x_T) \cap \{x_s, \dots, x_T\} \right| \label{eq:weight_cycle_existence_proof_2}.
 	\end{align}
Moreover, applying Lemma~\ref{lem:visited_neighbours_bound} with $\lambda=1$, $v=x_T$ and $s$ as above, we have, with positive probability, that \begin{equation}\label{eq:weight_cycle_existence_proof_3}\begin{split}
		\left| \outneighb(x_T) \cap \{x_s, \dots, x_T\} \right| 
			&\leq 1 + (2e-2) \cdot \dout(x_T) \cdot \sum_{t=s}^{T-1}{ \frac{1}{\dout(x_t)}} + 3 \cdot \log n  \\
			&\leq 4 \cdot \dout(x_T) \cdot \sum_{t=s}^{T-1}{ \frac{1}{\dout(x_t)}} + 4 \log n.  \end{split}\end{equation}
	Combining \eqref{eq:weight_cycle_existence_proof_2}, \eqref{eq:weight_cycle_existence_proof_3} we obtain
	\begin{align*}
		\frac{1}{2} \dout(x_T) \leq 4 \dout(x_T) \cdot \sum_{t=s}^{T-1}{ \frac{1}{\dout(x_t)} } + 4 \cdot \log n.
	\end{align*}
	Rearranging the terms gives \begin{align*}
		\sum_{t=s}^T{ \frac{1}{\dout(x_t)} } \geq \frac{1}{8} - \frac{\log n}{\dout(x_T)} \geq \frac{1}{8} - \frac{1}{K_0},
	\end{align*} since $\dout(x_T) \geq \delta(G) \geq K_0 \log n$. The lemma follows by choosing $K_0>8$ and $K_1:=1/8-1/K_0.$
\end{proof}

To conclude this section, we show that it is possible to use the random path in Definition~\ref{def:easy_case_random_walk} to prove a weaker version of Theorem~\ref{thm:weighted_digraph_heavy_cycle_existence} in the case of uniform out-weights. Extending this to a full proof of the theorem requires some additional ideas, which will be discussed in the next section.

\begin{proposition}
Let $G$ be a sinkless digraph on $n$ vertices. Consider a random path $(x_0, \dots, x_T)$ on $G$ and let $C$ be the corresponding cycle. Then, with high probability as $n\rightarrow\infty$ we have
\begin{equation*}\label{eq:rwgengraph}\sum_{v\in C} \frac{1}{\dout(v)} \geq \frac{\log\log n}{8 \log n}.\end{equation*}
\end{proposition}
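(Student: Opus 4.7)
The plan is to follow the proof of Proposition~\ref{prop:weight_cycle_existence} almost verbatim, but with the choice $\lambda = \log\log n$ instead of $\lambda = 1$, and an additional case split on the out-degree $d := \dout(x_T)$ of the terminal vertex. As before, I apply Lemma~\ref{lem:visited_neighbours_bound} to the random path $(x_0,\ldots,x_T)$. With probability at least $1 - 1/n$, combining the lemma (for $v = x_T$ and the cycle-closing index $s$) with the fact that at least $d/2$ out-neighbours of $x_T$ lie in $\{x_s,\ldots,x_T\}$ yields, using $e^{\log\log n} = \log n$, after setting $S := \sum_{t=s}^{T-1} 1/\dout(x_t)$ and rearranging,
\[
S \;\geq\; \frac{\log\log n}{4(\log n - 1)} \;-\; \frac{\log\log n + 3\log n}{2(\log n - 1)\, d}.
\]

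I now split based on $d$. The quantity of interest is $\sum_{v \in C} 1/\dout(v) = S + 1/d$. If $d \leq 8 \log n / \log\log n$, then already the single contribution $1/d \geq \log\log n/(8\log n)$ meets the bound. Otherwise $d > 8 \log n / \log\log n$, and combining the $+1/d$ coming from $x_T$ itself with the negative corrections above gives
\[
S + \frac{1}{d} \;\geq\; \frac{\log\log n}{4\log n}(1 - o(1)) \;-\; \frac{1}{2d}(1 + o(1)) \;\geq\; \frac{3\log\log n}{16\log n}(1 - o(1)),
\]
where the final inequality uses $1/(2d) < \log\log n/(16\log n)$. Since $3/16 > 1/8$, this exceeds $\log\log n/(8\log n)$ for $n$ sufficiently large. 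The bound from Lemma~\ref{lem:visited_neighbours_bound} holds with probability $1 - 1/n \to 1$, so the conclusion is w.h.p.\ as required.

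The main obstacle is the delicate constant chasing in the large-$d$ case. With $\lambda = \log\log n$, the leading term of $S$ is $\log\log n/(4\log n)$, while the $(3\log n)/(\lambda d)$ correction from Lemma~\ref{lem:visited_neighbours_bound} is as large as $3\log\log n/(16\log n)$ in the boundary regime $d \asymp \log n/\log\log n$. Without exploiting the $1/d$ contribution from $x_T$ itself, the resulting bound would only be $\log\log n/(16\log n)$, half of what is needed; it is the cancellation between $+1/d$ and $-3/(2d)$ that brings the constant up to $3/16 > 1/8$, and that is the one place where the argument differs substantively from the proof of Proposition~\ref{prop:weight_cycle_existence}.
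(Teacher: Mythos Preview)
Your proof is correct and follows the same approach as the paper: apply Lemma~\ref{lem:visited_neighbours_bound} with $\lambda=\log\log n$, combine with the stopping condition $\drem_T(x_T)<\dout(x_T)/2$, and balance the resulting lower bound on $S$ against the $1/\dout(x_T)$ term. The paper packages your case split as $\sum_{v\in C}1/\dout(v)\geq\max\{1/d,\;(1/4)\log\log n/\log n - 1/d\}$ and minimizes over $d$, which is arithmetically cleaner but identical in substance; your threshold $d=8\log n/\log\log n$ is precisely where the two arms of that max meet.
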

\begin{proof}
 Let \(s\) be the first index such that \(x_s \in \outneighb(x_T)\). Applying Lemma~\ref{lem:visited_neighbours_bound} with $\lambda=\log \log n$ it follows that, with high probability,
\begin{equation}
\label{eq:weight_cycle_existence_proof_4}\begin{split}
		\left| \outneighb(x_T) \cap \{x_s, \dots, x_T\} \right| 
			&\leq 1 + \frac{2\log n-2}{\log \log n} \cdot \dout(x_T) \cdot \sum_{t=s}^{T-1}{ \frac{1}{\dout(x_t)}} + 3 \cdot \frac{\log n}{\log \log n}  \\
			&\leq \frac{2 \log n}{\log \log n} \cdot \dout(x_T) \cdot \sum_{t=s}^{T-1}{ \frac{1}{\dout(x_t)}} + 4 \frac{\log n}{\log \log n}.  \end{split}
\end{equation}
By Definition~\ref{def:easy_case_random_walk}, the random path ends when \(\drem_T(x_T) < \dout(x_T) / 2 \), which implies that \[ \dout(x_T) / 2 < \dout(x_T)- \drem_T(x_T) = \left| \outneighb(x_T) \cap \{x_s, \ldots, x_T\} \right|.\]By combining this together with \eqref{eq:weight_cycle_existence_proof_4} and rearranging the terms, it follows that
\begin{equation*}
\sum_{t=s}^{T-1} \frac{1}{\dout(x_t)} \geq \frac{1}{4}\frac{\log \log n}{\log n} - \frac{2}{\dout(x_T)}.
\end{equation*}

Recall that the cycle corresponding to the random path was described as \(C = (x_s, \ldots, x_T)\), obtained by taking the edge from the last vertex \(x_T\) to its first neighbour \(x_s\) in the path. We can conclude that 
\begin{equation*}
\sum_{v\in C} \frac{1}{\dout(v)} = \frac{1}{\dout(x_T)} + \sum_{t=s}^{T-1} \frac{1}{\dout(x_t)} \geq \max\left\{\frac{1}{\dout(x_T)}, \frac{1}{4}\frac{\log \log n}{\log n} - \frac{1}{\dout(x_T)}\right\},
\end{equation*}
and the proposition follows by minimizing the right-hand side over $\dout(x_T)$.
\end{proof}

% !TeX root = long_directed_cycles.tex
% !TeX spellcheck = en_GB 

\section{Derandomization}
\label{sec:weighted_case}

In this section we extend the ideas of our walk of Section~\ref{sec:unweighted_logn_case} to weighted graphs. We provide Theorem~\ref{thm:weighted_general_weight_cycle_high} below, from which we derive Theorems~\ref{thm:main_result} and~\ref{thm:weighted_digraph_heavy_cycle_existence}. 

\begin{theorem}
	\label{thm:weighted_general_weight_cycle_high}
	Let $G$ be a weighted digraph with maximum degree $\Delta \geq 20$ and let $\lambda \in [1, \log \log \Delta]$. If $G$ has maximum weight $\weight_{max} \leq {\lambda}/{50 \log \Delta}$ and is such that every vertex $v$ has out-weight $\wout(v) = 1$, then $G$ contains a cycle $C$ of weight at least \begin{align*}
		\weight(C) \geq \frac{\lambda}{50 e^\lambda}.
	\end{align*}
\end{theorem}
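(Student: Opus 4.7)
The strategy is to adapt the random-walk argument of Section~\ref{sec:unweighted_logn_case} to the weighted setting and then replace the random walk by a deterministic path constructed greedily via a pessimistic-estimator rule. Starting from an arbitrary vertex $x_0$, I would build a simple path $(x_0, x_1, \dots, x_T)$ one vertex at a time: at each step $t$, select an unvisited out-neighbour $x_{t+1}$ of $x_t$ according to the greedy rule specified below, and stop at the first time $T$ with $\wrem_T(x_T) < 1/2$. Since $\wout(x_T) = 1$, the captured out-weight $W_T(x_T) := \sum_{\tau \leq T,\; x_\tau \in \outneighb(x_T)} \weight(x_T, x_\tau)$ exceeds $1/2$. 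Letting $s$ be the smallest index with $x_s \in \outneighb(x_T)$, the resulting cycle $C = (x_s, x_{s+1}, \dots, x_T, x_s)$ has weight $\weight(C) = \weight(x_T, x_s) + \sum_{t=s}^{T-1} \weight(x_t, x_{t+1})$.

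The key estimate is the following weighted analogue of Lemma~\ref{lem:visited_neighbours_bound}: for every vertex $v \in V(G)$,
\[ W_T(v) \;\leq\; \frac{2(e^\lambda-1)}{\lambda}\sum_{t=0}^{T-1}\weight(x_t,x_{t+1}) \;+\; \frac{K\log\Delta}{\lambda}\,\weight_{max}, \]
where $K$ is an absolute constant. Applied to $v = x_T$ together with $W_T(x_T) > 1/2$ and the hypothesis $\weight_{max}\leq \lambda/(50\log\Delta)$, the error term becomes a small constant and rearrangement yields $\weight(C) \geq \lambda/(50 e^\lambda)$, as required. To establish the estimate, I would introduce the pessimistic estimator
\[ \Phi_t^{(v)} := \exp\!\left[\lambda\,W_t(v)/\weight_{max} \;-\; 2(e^\lambda-1)\,\weight_{path}(0,t)/\weight_{max}\right], \qquad \weight_{path}(0,t) := \sum_{\tau<t}\weight(x_\tau,x_{\tau+1}), \]
and fix the greedy rule: at step $t$, choose $x_{t+1}$ among unvisited out-neighbours of $x_t$ so that $\sum_v \Phi_t^{(v)}\weight(v,x_{t+1}) \leq 2\,\weight(x_t,x_{t+1})\sum_v \Phi_t^{(v)}$. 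Such an $x_{t+1}$ exists by an averaging argument: since $\wout(v) = 1$, the total $\sum_u \sum_v \Phi_t^{(v)} \weight(v, u)$ over unvisited out-neighbours $u$ is at most $\sum_v \Phi_t^{(v)}$, while $\sum_u \weight(x_t, u) = \wrem_t(x_t) \geq 1/2$, so at least one $u$ meets the inequality. Combining the convexity bound $e^{\lambda y} \leq 1 + (e^\lambda-1)y$ on $[0,1]$ (exactly the step used in the proof of Lemma~\ref{lem:adversarial_probabilities}) with the elementary identity $(1+z)e^{-z} \leq 1$ for $z \geq 0$ then shows that the greedy choice guarantees $\sum_v \Phi_{t+1}^{(v)} \leq \sum_v \Phi_t^{(v)}$, so the aggregate estimator is non-increasing throughout the walk.

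The main obstacle I anticipate lies in the error term. The naive initialisation $\Phi_0^{(v)} = 1$ for every $v$ gives $\sum_v \Phi_T^{(v)} \leq n$, which translates into an error of order $\log n$ rather than the required $\log \Delta$; since the hypothesis $\weight_{max} \leq \lambda/(50\log\Delta)$ is much weaker than the analogous bound with $\log n$, closing this gap is essential. The natural remedy is to localise the estimator, tracking only in-neighbours of already-visited vertices (at most $\Delta$ new ones become relevant at each step) and accounting for the contribution of newly-activated terms against the growth of $\weight_{path}$ so that the total estimator increases by only a controlled amount per step. Making this bookkeeping precise while preserving the averaging step that guarantees the greedy rule is realisable is the central technical task of the proof.
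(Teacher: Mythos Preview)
Your plan has a genuine gap precisely at the step you flag as the ``central technical task''. Your potential $\Phi_t^{(v)}$ penalises by $2(e^\lambda-1)\weight(x_t,x_{t+1})/\weight_{max}$ per step, and your computation correctly shows that for already-active $v$ the greedy choice yields $\sum_v\Phi_{t+1}^{(v)}\le\sum_v\Phi_t^{(v)}$. But the implied contraction is of order $\weight(x_t,x_{t+1})/\weight_{max}$, and nothing in your greedy rule (a $\weight(x_t,\cdot)$-weighted average) prevents the selected edge from having arbitrarily small weight. Meanwhile each step activates up to $\Delta$ new vertices, an additive $1/\Delta$ after the natural $1/\Delta^2$ normalisation, and over a path of length up to $n$ the aggregate can grow to order $n/\Delta$ --- putting you right back at a $\log n$ error term. ``Accounting for newly-activated terms against the growth of $\weight_{path}$'' cannot work, because that growth per step is not bounded below. (A smaller issue: your displayed ``key estimate'' compares $W_T(x_T)$ with the \emph{full} path weight $\sum_{t=0}^{T-1}\weight(x_t,x_{t+1})$, which only forces the path, not the cycle $(x_s,\dots,x_T)$, to be heavy; you need the localised version for this conclusion as well, not only for the $\log n$ versus $\log\Delta$ issue.)

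The paper's fix is to decouple the two roles of your single potential. A first estimator $a_P(v)=\exp\bigl[\tfrac{1}{\weight_{max}}\sum_{r\ge act_P(v)}(\lambda\,\weight(v,x_{r+1})-e^\lambda/\drem_r(x_r))\bigr]$ replaces your penalty $\weight(x_r,x_{r+1})$ by $1/\drem_r(x_r)\ge 1/\Delta$; this yields a \emph{uniform} per-step contraction that, after the $1/\Delta^2$ normalisation, absorbs the new activations and keeps $A_\Pi=\tfrac{1}{\Delta^2}\sum_v a_\Pi(v)$ bounded. From $a_\Pi(x_T)\le\Delta^2$ one then deduces that $\sum_{t=s}^{T-1}1/\drem_t(x_t)$ is large --- but this is a sum of inverse remaining degrees, not a weight. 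A \emph{second} potential $B_\Pi=\sum_{s}\tfrac{1}{\drem_s(x_s)}\exp\bigl[-\tfrac{50e^\lambda}{\lambda}\sum_{r\ge s}\weight(x_r,x_{r+1})\bigr]$, controlled simultaneously by the same greedy rule (minimising $A+B$), converts this into the desired lower bound on $\sum_{t=s}^{T-1}\weight(x_t,x_{t+1})$. This detour through $\sum 1/\drem_t$ is the missing idea; your single-potential scheme tries to skip it, and that is exactly why the absorption of newly activated vertices fails.
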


Note that the condition $\Delta \geq 20$ ensures that $\log \log \Delta \geq 1$. We can already note two inequalities that will help us throughout our analysis. First, since $\lambda \leq \log \log \Delta$ we have $\weight_{max} \leq {\lambda}/{50 \log \Delta} \leq {\lambda}/{50 e^\lambda}$. Second, simple analysis shows that $x \mapsto \frac{x}{e^x}$ is maximum in $x = 1$ so \begin{align*}
		\weight_{max} \leq \frac{\lambda}{50 e^\lambda} \leq \frac{1}{100}. \numberthis \label{eq:w_max_simple_bound_1}
\end{align*}
Before proving Theorem~\ref{thm:weighted_general_weight_cycle_high} we show how it implies Theorems~\ref{thm:main_result} and~\ref{thm:weighted_digraph_heavy_cycle_existence}. In the case of Theorem~\ref{thm:main_result} it is useful to note the following consequence of Theorem~\ref{thm:weighted_general_weight_cycle_high}.

\begin{corollary}
	\label{cor:weighted_general_constant_weight_cycle_existence}
	There exists a constant \(\alpha > 0\) such that the following holds. Let $G$ be a digraph with maximum degree $\Delta$ and minimum degree $\delta \geq 50 \log \Delta > 0$. Then $G$ contains a cycle $C$ such that $\sum_{v \in C}{ \frac{1}{\dout(v)} } \geq \alpha$.
\end{corollary}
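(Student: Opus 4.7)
The plan is to derive the corollary as a direct consequence of Theorem~\ref{thm:weighted_general_weight_cycle_high} applied to the natural inverse-out-degree weighting that was introduced in Section~\ref{sec:unweighted_logn_case}. Concretely, I would define $\weight(u,v) := 1/\dout(u)$ for every arc $(u,v) \in E(G)$. By construction, each vertex $v$ satisfies $\wout(v) = \dout(v) \cdot 1/\dout(v) = 1$, and the largest edge weight is $\weight_{max} = 1/\min_u \dout(u) \leq 1/\delta(G)$, which is at most $1/(50 \log \Delta)$ by the hypothesis $\delta \geq 50 \log \Delta$.

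With this weighting in hand, I would invoke Theorem~\ref{thm:weighted_general_weight_cycle_high} with the choice $\lambda := 1$. The condition $\weight_{max} \leq \lambda/(50 \log \Delta)$ is exactly what we have verified, and we still need $\Delta \geq 20$ and $\lambda \leq \log \log \Delta$. Both are automatic: the assumption $\delta \geq 50 \log \Delta > 0$ forces $\Delta \geq 2$ and hence $\delta \geq 50 \log 2 > 30$; combining this with $\delta \leq \Delta$ gives $\Delta \geq 50 \log \Delta$, which an elementary estimate shows forces $\Delta$ to be at least a few hundred, so in particular $\Delta \geq 20$ and $\log \log \Delta \geq 1$.

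The theorem therefore produces a cycle $C \subseteq G$ with $\weight(C) \geq 1/(50e)$. Since every vertex of a cycle has exactly one out-edge lying on that cycle,
\[ \weight(C) \;=\; \sum_{(u,v) \in E(C)} \frac{1}{\dout(u)} \;=\; \sum_{v \in C} \frac{1}{\dout(v)}, \]
so the corollary follows by taking $\alpha := 1/(50e)$.

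I do not anticipate any genuine obstacle here: this corollary is essentially Theorem~\ref{thm:weighted_general_weight_cycle_high} specialized to the inverse-out-degree weighting, which is exactly the bridge between heavy cycles and cycle decompositions set up in Section~\ref{sec:unweighted_logn_case} via Lemma~\ref{lem:small_decomp_existence}. The only real bookkeeping is checking that the minimum-degree assumption of the corollary is strong enough to implicitly control $\Delta$ and so subsume the hypothesis $\Delta \geq 20$ of the theorem, which is an easy manipulation of $\delta \geq 50 \log \Delta$.
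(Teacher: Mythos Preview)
Your proposal is correct and matches the paper's proof almost exactly: the same inverse-out-degree weighting, the same choice $\lambda=1$, and the same constant $\alpha=1/(50e)$. The only cosmetic difference is that the paper disposes of the range $\Delta\le 20$ by a separate trivial bound (any cycle has weight at least $1/20$), whereas you observe that the hypothesis $\delta\ge 50\log\Delta$ already forces $\Delta\ge 50\log\Delta$ and hence $\Delta\ge 20$, making that case vacuous.
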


\begin{proof}
	Choose \(\alpha = 1 / 50e\). Note that $G$ is sinkless because of the minimum degree condition, so it contains a cycle. If $\Delta \leq 20$ then any cycle $C$ is such that $\sum_{v \in C}{1 / \dout(v)} \geq \sum_{v \in C}{1 / \Delta} \geq 1/20 \ge \alpha$. If not, we give every edge $e = (u,v)$ the weight $\weight(e) = {1}/{\dout(u)}$. This ensures that $\wout(v) = 1$ for all $v$ and that $\weight_{max} = {1}/{\delta} \leq {1}/{(50 \log \Delta)}$. It then suffices to apply Theorem~\ref{thm:weighted_general_weight_cycle_high} with $\lambda = 1$. 
\end{proof}
 Note that Theorem~\ref{thm:main_result} follows directly from combining Corollary~\ref{cor:weighted_general_constant_weight_cycle_existence} with Lemma~\ref{lem:small_decomp_existence}.

\mainweighted*
\begin{proof}[Proof of Theorem~\ref{thm:weighted_digraph_heavy_cycle_existence}]
	Consider a digraph $G$ such that $\wout(v) \geq 1$ for all $v \in V$. By possibly decreasing the weight of some edges we may assume without loss of generality that $\wout(v) = 1$ for all vertices $v$. Note that this condition ensures that $G$ is sinkless; let $G' \subseteq G$ be a strongly connected component with no out-edge. Let $\Delta' = \Delta(G') \leq \Delta$ the maximum degree of $G'$. 
	
	Since $G'$ is strongly connected, we can extend any edge to a cycle. If $\Delta' < 20$, then there is an edge, and therefore by extension, a cycle with weight at least $1/20 \geq \Omega( \log \log \Delta /\log \Delta )$. If $\Delta' \geq 20$ and there exists an edge $e \in G'$ with weight $\weight(e) > \log \log \Delta / 50 \log \Delta$ then we can extend this edge to a cycle which has at least that weight, that is of weight at least $\Omega(\log \log \Delta / \log \Delta)$. Finally, if $\Delta' \geq 20$ and every edge in $G'$ has weight at most $\weight_{max} \leq \log \log \Delta / 50 \log \Delta \leq \log \log \Delta' / 50 \log \Delta'$, then by Theorem~\ref{thm:weighted_general_weight_cycle_high} there exists a cycle $C$ of weight at least $\weight(C) \geq \log \log \Delta' / 50 \log \Delta' \geq \Omega( \log \log \Delta / \log \Delta)$
\end{proof}

In the remaining part of this section we prove Theorem~\ref{thm:weighted_general_weight_cycle_high}. Given the intuition from the last section, instead of using randomness we cleverly choose the next vertex to ensure the weight of our cycle is sufficiently large. We start with some definitions. 

\begin{definition}
	\label{def:active2}
	Given a directed graph $G$ and a path $P = (v_0, \dots, v_t)$ in $G$, we say that a vertex $v$ is activated if $v$ has an out-neighbour in $P$. We define the activation time $v$ as $act_P(v) \eqdef \min \{s \mid v_s \in \outneighb(v) \}$.
\end{definition}

For the next definition, recall that in Definition~\ref{def:remdeg} we defined the remaining degree with respect to a fixed sub-path. In particular, recall that $\drem_r(v)$ does not depend on the whole path but only on $v_0,\ldots,v_r$ and thus does not change when we extend the path.

\begin{definition}
	\label{def:annoyance2}
	Given a directed graph $G$ and a path $P = (v_0, \dots, v_t)$ of length $t$ in $G$ and an activated vertex $v$ we define \begin{align*}
		a_{P}(v) \eqdef e^{ \frac{1}{\weight_{max}} \sum_{r = act_P(v)}^{t-1}{ \left( \lambda \cdot \weight(v,v_{r+1}) - e^\lambda \cdot \frac{1}{\drem_r(v_r)} \right) } }.
	\end{align*}
	We also define $A_{P} \eqdef \frac{1}{\Delta^2} \sum_{ v \text{ activated} }{ a_{P}(v) }$.
\end{definition}

\begin{definition}
	\label{def:bnnoyance}
	Given a directed graph $G$ and a path $P = (v_0, \dots, v_t)$ of length $t$ in $G$, for any $s < t$ we define \begin{align*}
		b_{P}(s) \eqdef e^{ -\frac{50 e^\lambda}{\lambda} \sum_{r=s}^{t-1}{ \weight(v_r, v_{r+1}) } }.
	\end{align*} 
	We also define $B_{P} \eqdef \sum_{s=0}^{t-1}{ \frac{1}{\drem _s(v_s)} \cdot b_{P}(s) }$.
\end{definition}

Roughly speaking, the quantity \(A\) controls the \emph{sum of inverse of remaining degrees} along the path and allows us to obtain bounds similar to those obtained through Lemma~\ref{lem:adversarial_probabilities} in Section~\ref{sec:unweighted_logn_case}. If we can guarantee that \(A\) is sufficiently small, then, as we show below, we can also guarantee the existence of a cycle with high \emph{sum of inverse of remaining degrees}. The quantity $B$ is designed to make sure that, as long as it is low enough, the \emph{sum of inverse of remaining degrees} on any interval of the path is comparable to the \emph{weight} of the path on this interval. We want to keep both $A$ and $B$ low enough so that, combining both, we can find a cycle which is heavy enough.

\begin{proposition}
	\label{prop:weighted_general_ab_bounded}
	Let $G$ be a weighted digraph with maximum degree $\Delta \geq 20$ and let $\lambda \in [1, \log \log \Delta]$. If $G$ has maximum weight $\weight_{max} \leq {\lambda}/{50 \log \Delta}$ and is such that every vertex $v$ has out-weight $\wout(v) = 1$, then there exists a path $\Pi = (x_0, \dots, x_T)$ in $G$ such that $\wrem_{\Pi}(x_T) < 1/2$ and \begin{align*}
		A_\Pi + B_\Pi \leq \frac{7 \lambda}{50 e^\lambda}.
	\end{align*}
\end{proposition}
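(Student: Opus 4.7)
The plan is to construct $\Pi$ greedily. Starting from an arbitrary vertex $x_0$, I extend the path one vertex at a time, stopping as soon as $\wrem_t(x_t) < 1/2$. At each extension step, among the remaining out-neighbors $N = \outneighb(x_t)\setminus V(P_t)$, I choose $x_{t+1}$ to minimize the potential $\Phi_{t+1} \eqdef A_{P_{t+1}} + B_{P_{t+1}}$. Since $\min_{u\in N}\Phi_{t+1}(u) \leq \frac{1}{|N|}\sum_{u \in N}\Phi_{t+1}(u)$, it suffices to bound the uniform average of $\Phi_{t+1}$ over $u \in N$ at every step. This effectively derandomizes the uniform random walk from Section~\ref{sec:unweighted_logn_case}, with $A$ and $B$ playing the role of Chernoff-type indicators analogous to those appearing in Lemma~\ref{lem:adversarial_probabilities}.

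The key bound for $A$ is a deterministic counterpart of Lemma~\ref{lem:adversarial_probabilities}. For each already-activated vertex $v$, the per-step multiplicative factor in $a_{P_t}(v)$ equals $\exp\!\bigl((\lambda\weight(v,x_{t+1}) - e^\lambda/\drem_t(x_t))/\weight_{max}\bigr)$. By convexity of $e^x$ on $[0,\lambda]$ combined with $\sum_{u\in N}\weight(v,u) \leq \wout(v) = 1$, the uniform average of $e^{\lambda\weight(v,u)/\weight_{max}}$ over $u\in N$ is at most $\exp\!\bigl((e^\lambda-1)/(\weight_{max}\drem_t(x_t))\bigr)$, making the average update factor at most $\alpha_t \eqdef e^{-1/(\weight_{max}\drem_t(x_t))} \leq 1$. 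The contribution of newly activated vertices is at most $\din(x_{t+1})/\Delta^2 \leq 1/\Delta$ per step, yielding the recursion $A_{t+1}\leq \alpha_t A_t + 1/\Delta$. Its equilibrium value satisfies $A^* = 1/(\Delta(1-\alpha_t)) \leq 1.6\,\weight_{max} \leq 1.6\,\lambda/(50\log\Delta)$ uniformly over $\drem_t(x_t)\leq \Delta$, using $1-e^{-x}\geq (1-e^{-1})x$ for $x\in[0,1]$. The condition $\lambda \leq \log\log \Delta$ gives $e^\lambda \leq \log \Delta$, hence $A^* \leq 1.6\,\lambda/(50 e^\lambda)$.

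For $B$, I use the explicit recursion $B_{t+1} = (B_t + 1/\drem_t(x_t))\cdot e^{-50e^\lambda\weight(x_t,x_{t+1})/\lambda}$. Convexity of $e^{-x}$ on $[0,x_0]$ with $x_0\eqdef 50e^\lambda\weight_{max}/\lambda \leq e^\lambda/\log\Delta \leq 1$, together with $\wrem_t(x_t)\geq 1/2$, yields an average multiplicative factor bounded by $1-12.5\,e^\lambda/(\lambda\drem_t(x_t))$. The resulting contraction balances the additive $1/\drem_t(x_t)$ term and drives $B_t$ below an equilibrium value $B^* \leq \lambda/(12.5\,e^\lambda) = 4\,\lambda/(50 e^\lambda)$. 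Combining both equilibrium bounds with the initial data $A_0 = \din(x_0)/\Delta^2 \leq 1/\Delta \leq \lambda/(50 e^\lambda)$ (using $\Delta\geq 50 e^\lambda/\lambda$) and $B_0 = 0$ yields $A_\Pi + B_\Pi \leq 5.6\,\lambda/(50 e^\lambda) < 7\,\lambda/(50 e^\lambda)$ at termination.

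The main obstacle is twofold. First, the equilibrium constants must be tuned with care: the sharp Taylor and convexity estimates underlying the $1.6$ and $4$ factors, together with the interplay $e^\lambda \leq \log\Delta$ coming from $\lambda \leq \log\log\Delta$ and $\weight_{max}\leq \lambda/(50\log\Delta)$, are what fit both quantities simultaneously below the target $7\lambda/(50 e^\lambda)$. Second, one must verify that the greedy walk actually terminates with $\wrem_T(x_T) < 1/2$: the density condition $\Delta \geq 1/\weight_{max} \geq 50\log\Delta/\lambda$ ensures sufficient out-degree at each vertex for the $A$ and $B$ dynamics to stay near equilibrium throughout, ruling out pathological walks on which $\Phi$ could drift outside the acceptable range before termination.
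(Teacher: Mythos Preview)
Your greedy walk and the derandomization-via-expectation idea are exactly what the paper does; the overall strategy is correct. The genuine gap is in how you combine the $A$- and $B$-analyses. The recursions $A_{t+1}\leq \alpha_t A_t + 1/\Delta$ and $B_{t+1}\leq \beta_t(B_t+1/\drem_t)$ that you write down hold only for the uniform \emph{average} over $u\in N$, not for the greedy choice. Since at each step you minimize the \emph{sum} $\Phi=A+B$, all you can conclude is $\Phi_{t+1}\leq \alpha_t A_t+\beta_t B_t+c_t$, which depends on the split $(A_t,B_t)$, not just on $\Phi_t$; the greedy step may drive $A_{t+1}$ well above your $A^*$ (compensated by small $B_{t+1}$), after which the separate-equilibrium argument breaks. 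Adding the individual equilibria $A^*+B^*=5.6\,\lambda/(50e^\lambda)$ is therefore not justified. The paper avoids this by arranging that both $A$ and $B$ contract with the \emph{same} factor $1-15\,e^\lambda/(\lambda\,\drem_t(x_t))$ (Claims~\ref{lem:weighted_general_a_expectation} and~\ref{lem:weighted_general_b_expectation}); then $\E_u[\Phi_{t+1}]\leq(1-15\,e^\lambda/(\lambda\,\drem_t))\Phi_t+2/\drem_t$ depends only on $\Phi_t$, and a straightforward induction maintains $\Phi_t\leq 7\lambda/(50e^\lambda)$. Note that with your stated constant $12.5$ for $B$ the combined equilibrium would be $2\lambda/(12.5\,e^\lambda)=8\lambda/(50e^\lambda)$, overshooting the target; a slightly sharper convexity estimate is needed.

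Two smaller issues: in your $A$-analysis you invoke $1-e^{-x}\geq(1-e^{-1})x$ for $x\in[0,1]$, but $x=1/(\weight_{max}\,\drem_t)$ can be as large as $2$ (since $\drem_t\geq\wrem_t/\weight_{max}\geq 1/(2\weight_{max})$), so the range must be $[0,2]$. And the base-case bound $1/\Delta\leq\lambda/(50e^\lambda)$ would require $\Delta\geq 50e^\lambda/\lambda\approx 136$ at $\lambda=1$, which is not implied by $\Delta\geq 20$; only the weaker inequality $1/\Delta\leq 7\lambda/(50e^\lambda)$ holds in general, and it is all that the induction needs.
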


The path $\Pi$ of Proposition~\ref{prop:weighted_general_ab_bounded} can be seen as the output of a (deterministic) algorithm that creates a path by walking on the graph, greedily minimising $A + B$ at each step. To prove this proposition, we show that at each step of this walk, the quantity $A + B$ cannot grow too much. For that, we use Claims~\ref{lem:weighted_general_a_expectation} and~\ref{lem:weighted_general_b_expectation} below, which state that $A$, $B$ would not grow too much in expectation if we decided to choose the next vertices randomly. We assume the conditions on $G$, $M$ and $\lambda$ from Proposition~\ref{prop:weighted_general_ab_bounded} hold.

\begin{claim}
	\label{lem:weighted_general_a_expectation}
	Let $P = (v_0, \dots, v_t)$ be \emph{any} path in $G$ such that $\wrem_t(v_t) \geq {1}/{2}$. Consider the path $P'=(v_0, \dots, v_t, u)$ obtained by adding $u$ chosen u.a.r.\ among the unvisited neighbours of $v_t$. We have \begin{align*}
		\E \left[ A_{P'} \right] \leq \left( 1 - 15 \frac{e^\lambda}{\lambda \cdot \drem_t(v_t)} \right) \cdot A_{P} + \frac{1}{\Delta}.
	\end{align*}
\end{claim}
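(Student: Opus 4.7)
The plan is to expand $A_{P'}$ by separating vertices activated in $P$ from those newly activated by $u$, and then average over the random choice of $u$. A vertex $v$ already activated in $P$ keeps its activation time, so $a_{P'}(v) = a_P(v) \cdot e^{(\lambda \weight(v,u) - e^\lambda / \drem_t(v_t))/\weight_{max}}$; a vertex newly activated by $u$ has $act_{P'}(v) = t+1$ and hence $a_{P'}(v) = 1$. There are at most $\din(u) \leq \Delta$ newly activated vertices, so their total contribution to $A_{P'}$ is at most $1/\Delta$, which accounts for the additive term in the claim.

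For the already-activated contribution, the factor $e^{-e^\lambda/(\weight_{max}\,\drem_t(v_t))}$ is deterministic and can be pulled out. To handle the random factor $e^{\lambda \weight(v,u)/\weight_{max}}$, I would invoke convexity of $y \mapsto e^{\lambda y}$ on $[0,1]$ to bound it by $1 + (e^\lambda-1)\weight(v,u)/\weight_{max}$. Averaging over $u$ uniform in $R \eqdef \outneighb(v_t) \setminus V(P)$ gives $\E_u[e^{\lambda \weight(v,u)/\weight_{max}}] \leq 1 + \frac{e^\lambda-1}{\weight_{max}\,\drem_t(v_t)} \sum_{u'\in R} \weight(v,u')$. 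The key step is then to sum $a_P(v)$ times this bound over all activated $v$ and use $\sum_{u'\in R}\weight(v,u') \leq \wout(v) = 1$ to collapse the double sum to $\Delta^2 A_P$. This produces $\E[A_{P'}] \leq A_P \cdot e^{-\alpha}(1 + \alpha(1-e^{-\lambda})) + 1/\Delta$, where $\alpha \eqdef e^\lambda/(\weight_{max}\,\drem_t(v_t))$.

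All that remains is the analytic estimate $e^{-\alpha}(1 + \alpha(1-e^{-\lambda})) \leq 1 - 15\, e^\lambda/(\lambda\,\drem_t(v_t))$. Applying $1+x \leq e^x$ reduces the left-hand side to $e^{-\alpha e^{-\lambda}} = e^{-1/(\weight_{max}\,\drem_t(v_t))}$. The hypothesis $\wrem_t(v_t) \geq 1/2$ together with $\weight(v_t, u') \leq \weight_{max}$ forces $\drem_t(v_t) \cdot \weight_{max} \geq 1/2$, so the exponent lies in $[0,2]$; concavity of $1-e^{-x}$ on $[0,2]$ then yields $1 - e^{-x} \geq \frac{1-e^{-2}}{2}\, x$. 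Finally, $\lambda \leq \log\log\Delta$ (equivalently $\log\Delta \geq e^\lambda$) converts $\weight_{max} \leq \lambda/(50\log\Delta)$ into $\weight_{max} \leq \lambda/(50 e^\lambda)$, which is exactly what makes the resulting constant exceed $15$. I expect this last bookkeeping to be the main obstacle: each of the hypotheses on $\weight_{max}$ and $\lambda$ is used exactly once, and the slack is tight enough that arriving at a clean constant requires careful tracking of all factors, including ensuring that the convexity bound $e^{\lambda y} \leq 1 + (e^\lambda-1)y$ is strong enough for the full range $\alpha \in [0, 2e^\lambda]$ that is compatible with $\wrem_t(v_t) \geq 1/2$.
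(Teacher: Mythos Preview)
Your proposal is correct and follows essentially the same route as the paper: split $A_{P'}$ into the contribution of vertices already activated by $P$ (bounded via the convexity estimate $e^{\lambda y}\le 1+(e^\lambda-1)y$ on $[0,1]$, then $1+x\le e^x$, then the chord bound for $e^{-x}$ on $[0,2]$ using $\drem_t(v_t)\,\weight_{max}\ge 1/2$, and finally $\weight_{max}\le \lambda/(50 e^\lambda)$) and the at most $\Delta$ newly activated vertices contributing $\le 1/\Delta$. The constants work out with room to spare (the chord bound gives a factor $(1-e^{-2})/2\approx 0.43$, and $0.43\cdot 50>15$), so your worry about tightness in the final bookkeeping is unfounded.
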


\begin{proof}
	By definition of $a$, for any vertex $v$ activated by $P$ and $u \in \outneighb(v_t) \setminus P$, 
	\begin{align}
		a_{P'}(v) = a_{P}(v) \cdot e^{ \frac{\lambda}{\weight_{max}} \weight(v,u) - \frac{e^\lambda}{\weight_{max}} \frac{1}{\drem_t(v_t)} }.\label{eq:weighted_general_a_expectation_proof_1}
	\end{align}
	Note that $0 \leq \weight(v,u) \leq \weight_{max}$ and since $x \mapsto e^{ \frac{\lambda}{\weight_{max}} x }$ is convex, $e^{\frac{\lambda}{\weight_{max}} \weight(v,u) } \leq 1 + \frac{e^{\lambda} - 1}{\weight_{max}} \weight(v,u)$. 
	Also, observe that, since we choose $u$ \uar among the unvisited neighbours of $v_t$, \begin{align*}
 			\E \left[ \weight(v,u) \right] 
 				&= \sum_{u' \in \outneighb(v_t) \setminus V(P)}{ \left( \Pr[u = u'] \cdot \weight(v,u') \right) } \\
 				&= \frac{1}{\drem_t(v_t)} \cdot \sum_{u' \in \outneighb(v_t) \setminus V(P)}{ \weight(v,u') } \\
 				&\leq \frac{1}{\drem_t(v_t)},
	\end{align*} 
	where the inequality comes from the fact that the sum of out-weights is at most \(\wout(v) = 1\). 
	Hence in expectation \begin{align}
		\E \left[ e^{ \frac{\lambda}{\weight_{max}} \weight(v,u)} \right] 
			\leq 1 + \frac{e^\lambda - 1}{\weight_{max}} \cdot \frac{1}{\drem_t(v_t)} 
			\leq e^{ \frac{e^\lambda - 1}{\weight_{max}} \frac{1}{\drem_t(v_t)} }. \label{eq:weighted_general_a_expectation_proof_2}
	\end{align}
	Combining \eqref{eq:weighted_general_a_expectation_proof_1} and \eqref{eq:weighted_general_a_expectation_proof_2} gives \begin{align*}
		\E \left[ a_{P'}(v) \right]
			&= a_{P}(v) \cdot e^{- \frac{e^\lambda}{\weight_{max}} \frac{1}{\drem _t(v_t)}} \cdot \E \left[ e^{ \frac{\lambda}{\weight_{max}}\weight(v,u) } \right] \\
			&\leq a_{P}(v) \cdot e^{- \frac{e^\lambda}{\weight_{max}} \frac{1}{\drem_t(v_t)}} \cdot e^{ \frac{e^\lambda - 1}{\weight_{max}} \frac{1}{\drem_t(v_t)} } \\
			&\leq a_{P}(v) \cdot e^{- \frac{1}{\weight_{max} \cdot \drem _t(v_t)}}.
	\end{align*}
	We have $\wrem_t(v_t) \geq {1}/{2}$ and we know that every edge has weight at most $\weight_{max}$; this implies $\drem_t(v_t) \cdot \weight_{max} \geq \wrem_t(v_t) \geq {1}/{2}$ or, in other words, $0 \leq {1}/{(\weight_{max} \cdot \drem_t(v_t))} \leq 2$. Since $x \mapsto e^{-x}$ is convex we have \begin{align*}
		e^{- \frac{1}{\weight_{max} \cdot \drem_t(v_t)}} \leq 1 + \frac{e^{-2} - 1}{2} \frac{1}{\weight_{max} \cdot \drem_t(v_t)} \leq 1 - 0.3 \frac{1}{\weight_{max} \cdot \drem_t(v_t)}.
	\end{align*}
	We noted in \eqref{eq:w_max_simple_bound_1} that $\weight_{max} \leq {\lambda}/{50 e^\lambda}$ so in expectation, the contribution of any vertex $v$ that was already activated by $P$ is going to decrease \[\E \left[ a_{P'}(v) \right] \leq \left( 1 - 15 \frac{e^\lambda}{\lambda \cdot \drem_t(v_t)} \right) \cdot a_P(v).\]
	
	To find an upper bound on the expectation of the total $A_{P'}$ we note that any new activated vertex contributes to it by $1/\Delta^2$ and that only the in-neighbours of $u$ can be `newly' activated. $u$ has at most $\Delta$ in-neighbours so the total contribution of newly activated vertices is at most $1 / \Delta$. We therefore have \begin{align*}
		\E \left[ A_{P' } \right]
			&\le \left( 1 - 15 \frac{e^\lambda}{\lambda \cdot \drem_t(v_t)} \right) \cdot A_P + \frac{1}{\Delta}.
	\end{align*}
	
\end{proof}

\begin{claim}
	\label{lem:weighted_general_b_expectation}
	Let $P = (v_0, \dots, v_t)$ be \emph{any} path in $G$ such that $\wrem_t(v_t) \geq {1}/{2}$. Consider the path $P'=(v_0, \dots, v_t, u)$ obtained by adding $u$ chosen u.a.r.\ among the unvisited neighbours of $v_t$. We have \begin{align*}
	\E \left[ B_{P'} \right] \leq \left( 1 - 15 \frac{e^\lambda}{\lambda \cdot \drem_t(v_t)} \right) \cdot B_{P } + \frac{1}{\drem_t(v_t)}.
	\end{align*}
\end{claim}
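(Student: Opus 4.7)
The plan is to compute $B_{P'}$ directly from its definition and then take expectation. Let me write $c \eqdef 50 e^\lambda/\lambda$ throughout. The key observation is that extending $P$ by $u$ has two effects on $B$: first, each existing term $b_P(s)$ with $0 \le s \le t-1$ picks up exactly one new factor $e^{-c\, \weight(v_t,u)}$ in its exponent, because the sum defining $b$ acquires the single new edge weight $\weight(v_t,u)$; second, the extended path contributes one new summand, namely $\frac{1}{\drem_t(v_t)} b_{P'}(t) = \frac{1}{\drem_t(v_t)} e^{-c\,\weight(v_t,u)}$. Using the fact (stressed right after Definition~\ref{def:active2}) that $\drem_s(v_s)$ depends only on $v_0, \dots, v_s$ and is therefore unaffected by the extension, I can factor out the common exponential and obtain the clean identity
\[
B_{P'} \;=\; e^{-c\,\weight(v_t,u)} \left( B_P + \frac{1}{\drem_t(v_t)} \right).
\]

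Taking expectation over $u$ chosen uniformly among the $\drem_t(v_t)$ unvisited out-neighbours of $v_t$, the problem reduces to bounding $\E[e^{-c\,\weight(v_t,u)}]$. I plan to use convexity of $x \mapsto e^{-cx}$: the chord bound gives $e^{-cx} \le 1 - \frac{1 - e^{-c\weight_{max}}}{\weight_{max}}\, x$ for all $x \in [0, \weight_{max}]$. Since $u$ is uniform on the unvisited out-neighbours,
\[
\E[\weight(v_t,u)] \;=\; \frac{\wrem_t(v_t)}{\drem_t(v_t)} \;\geq\; \frac{1}{2\,\drem_t(v_t)},
\]
using the hypothesis $\wrem_t(v_t) \geq 1/2$. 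This yields $\E[e^{-c\,\weight(v_t,u)}] \leq 1 - \frac{1-e^{-c\weight_{max}}}{2\weight_{max}\,\drem_t(v_t)}$.

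The remaining arithmetic step is to check $(1 - e^{-c\weight_{max}})/\weight_{max} \geq 3c/5 = 30 e^\lambda/\lambda$, which is just the elementary inequality $1 - e^{-y} \geq (3/5) y$ applied at $y = c\weight_{max}$. This inequality holds on $[0,1]$ since $1 - e^{-y}$ is concave, equals $0$ at $y=0$, and at $y=1$ satisfies $1-1/e \approx 0.632 > 0.6$. To see that $c\weight_{max}$ lies in this range, I would combine $c = 50e^\lambda/\lambda$ with $\weight_{max} \leq \lambda/(50 \log \Delta)$ and $\lambda \leq \log \log \Delta$, giving $c\weight_{max} \leq e^\lambda/\log \Delta \leq 1$.

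Putting it together,
\[
\E[B_{P'}] \;\leq\; \left(1 - \frac{15 e^\lambda}{\lambda\,\drem_t(v_t)}\right)\left(B_P + \frac{1}{\drem_t(v_t)}\right) \;\leq\; \left(1 - \frac{15 e^\lambda}{\lambda\,\drem_t(v_t)}\right) B_P + \frac{1}{\drem_t(v_t)},
\]
which is the claim. The whole argument is essentially bookkeeping plus one convexity bound; I expect the only subtle spot to be verifying that the chosen constants ($50$ in $c$, the threshold $1/2$ for $\wrem_t$, and the hypothesis $\weight_{max} \leq \lambda/50\log\Delta$) combine to force $c\weight_{max} \leq 1$ so that the linear chord bound $1-e^{-y} \geq 3y/5$ applies cleanly. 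That matches the analogous role of the bound $1/(\weight_{max} \drem_t(v_t)) \leq 2$ in Claim~\ref{lem:weighted_general_a_expectation}, giving the two estimates a parallel structure.
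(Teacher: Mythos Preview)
Your proposal is correct and follows essentially the same approach as the paper: both arguments apply a convexity chord bound to $e^{-c\weight(v_t,u)}$, use $\E[\weight(v_t,u)]=\wrem_t(v_t)/\drem_t(v_t)\geq 1/(2\drem_t(v_t))$, and arrive at the same factor $1-15e^\lambda/(\lambda\,\drem_t(v_t))$. The only cosmetic difference is that you first write the clean identity $B_{P'}=e^{-c\,\weight(v_t,u)}\bigl(B_P+1/\drem_t(v_t)\bigr)$ and then drop the shrinking factor on the additive term at the end, whereas the paper treats the old terms $s<t$ and the new term $s=t$ separately and bounds $b_{P'}(t)\leq 1$ directly; the two routes are equivalent.
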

\begin{proof}
	By definition of $b$, for any $s < t$ we have 
 \begin{align}
		b_{P'}(s)
			&= b_{P}(s) \cdot e^{- \frac{50 e^\lambda}{\lambda} \weight(v_t,u)}. \label{eq:weighted_general_b_expectation_proof_1}
	\end{align}
	Recall that in \eqref{eq:w_max_simple_bound_1} we explained that $\weight_{max} \leq {\lambda}/{50 e^\lambda}$. Thus, $0 \leq \weight(v_t,u) \leq {\lambda}/{50 e^\lambda}$ and since $x \mapsto e^{- \frac{50 e^\lambda}{\lambda} x }$ is convex, \begin{align*}
		e^{ -\frac{50 e^\lambda}{\lambda} \weight(v_t,u) } 
			\leq 1 + \frac{e^{-1} - 1}{\frac{\lambda}{50 e^\lambda}} \cdot \weight(v_t,u)
			\leq 1 - 30 \frac{e^\lambda}{\lambda} \cdot \weight(v_t, u). 
	\end{align*}
	Note that \[\E \left[ \weight(v_t, u) \right] = \frac{\wrem_t(v_t)}{\drem_t(v_t)} \geq \frac{1}{2 \cdot \drem_t(v_t)},\] since we know $\wrem_t(v_t) \geq {1}/{2}$. Hence in expectation \begin{align}
		\E \left[ e^{ -\frac{50 e^\lambda}{\lambda} \weight(v_t, u) } \right]
			\leq 1 - 15 \frac{e^\lambda}{\lambda \cdot \drem_t(v_t) }. \label{eq:weighted_general_b_expectation_proof_3}
	\end{align} 
	Combining \eqref{eq:weighted_general_b_expectation_proof_1} and \eqref{eq:weighted_general_b_expectation_proof_3} gives
	\begin{align*}
		\E \left[ b_{P'}(s) \right] \leq b_{P}(s) \cdot \left( 1 - 15 \frac{e^\lambda}{\lambda \cdot \drem_t(v_t) } \right).
	\end{align*}
	For any time $s<t$, the contribution of $b(s)$ to $B$ decreases in expectation by a factor $\left( 1 - 15 \frac{e^\lambda}{\lambda \cdot \drem_t(v_t) } \right)$. The contribution $b(t)$ is at most $1$ and is weighted by a factor ${1}/{\drem_t(v_t)}$ in $B$. Hence, in expectation \begin{align*}
		\E \left[ B_{P'} \right]
			&\leq \left( 1 - 15 \frac{e^\lambda}{\lambda \cdot \drem_t(v_t) } \right) \cdot B_{P} + \frac{1}{\drem_t(v_t)}.
	\end{align*}
\end{proof}

With Claims~\ref{lem:weighted_general_a_expectation} and~\ref{lem:weighted_general_b_expectation} proven we now show that Proposition~\ref{prop:weighted_general_ab_bounded} holds.

\begin{proof}[Proof of Proposition~\ref{prop:weighted_general_ab_bounded}]
	Consider the following (deterministic) walk on $G$: \begin{itemize}
		\item The first vertex $x_0$ is chosen arbitrarily.
		\item At each step $t$ such that $\wrem_t(x_t) \geq {1}/{2}$, the next vertex of the walk is $$x_{t+1} = \argmin_{u \in  \outneighb(x_t) \setminus \{x_0, \dots, x_t\} } \left( A_{(x_0, \dots, x_{t}, u)} + B_{(x_0, \dots, x_{t}, u)} \right).$$
		\item At step $t$ such that $\wrem_t(x_t) < {1}/{2}$, the walk stops. We call $T:=t$ the stopping time of the walk.
	\end{itemize}
	Note that this walk never self-intersects; we name $\Pi = (x_0, \dots, x_T)$ the path thus produced. For any $0 \leq t \leq T$, we also define $\Pi_t \eqdef (x_0, \dots, x_t)$ the path at step $t$, made of the first $t+1$ vertices of $\Pi$. We prove by induction that, for any $t$, \[A_{\Pi_t} + B_{\Pi_t} \leq \frac{7 \lambda}{50 e^\lambda}.\numberthis\label{eq:ab_bound_IH}\] 
	
	 Recall that we defined \(B_P\) as a sum of \(l\) terms where \(l\) is the length of the path \(P\). Because \(\Pi_0\) has length \(l = 0\), we have \(B_{\Pi_0} = 0\). For a path \(P\) of length \(l\) and an activated vertex \(v\), \(a_P(v)\) was defined as the exponential of a sum of \(l\) terms. Again, \(\Pi_0\) has length \(0\) so \(a_{\Pi_0}(v) = 1\) for every activated vertex \(v\). Because \(G\) has maximum degree \(\Delta\), at most \(\Delta\) vertices are activated by \(\Pi_0\). Therefore \[A_{\Pi_0} = \frac{1}{\Delta^2} \sum_{\text{\(v\) activated}}{a_{\Pi_0}(v)} \le \frac{1}{\Delta} \]
	 and we deduce that $A_{\Pi_0} + B_{\Pi_0} \leq 1/\Delta$. Observe that \[x \mapsto \frac{x}{e^x}\] is decreasing on $[1, \log \log \Delta]$, so if we can prove that $1 / \Delta \leq 7 \log \log \Delta / 50 \log \Delta $ --- or, equivalently that $\Delta \geq 50 \log \Delta / 7 \log \log \Delta $ --- the induction hypothesis holds for $t=0$, for every $\lambda \in [1, \log \log \Delta]$. A bit of analysis shows that \[x \mapsto \frac{50 \log x}{7 \log \log x} - x\] is decreasing on $[20, +\infty)$ and negative in $x = 20$, which proves that \eqref{eq:ab_bound_IH} is true for $t = 0$.
	
	 Suppose \eqref{eq:ab_bound_IH} is true up to some $t < T$; recall that this implies $\wrem_t(x_t) \geq {1}/{2}$. If we prolong $\Pi_t = (x_1, \dots, x_t)$ to $\Pi_t' = (x_1, \dots, x_t, u)$ by picking the next vertex $u$ \uar in the unvisited neighbourhood of $x_t$, then by linearity of expectation and Claims~\ref{lem:weighted_general_a_expectation} and~\ref{lem:weighted_general_b_expectation} we have \begin{align*}
		\E \left[ A_{\Pi_t'} + B_{ \Pi_t' } \right]
			&\leq \left( 1 - 15 \frac{e^\lambda}{\lambda \cdot \drem_t(x_t) } \right) \left( A_{\Pi_t} + B_{\Pi_t} \right) + \frac{1}{\Delta} + \frac{1}{\drem _t(x_t)} \\
			&\leq \left( 1 - 15 \frac{e^\lambda}{\lambda \cdot \drem_t(x_t) } \right) \left( A_{\Pi_t} + B_{\Pi_t} \right) + \frac{2}{\drem _t(x_t)}.
	\end{align*} 
	Since, by definition, $x_{t+1}$ is the neighbour of $x_t$ chosen to minimise the quantity $A + B$, we know that it is at most its expectation. Using \eqref{eq:ab_bound_IH} we get \begin{align*}
		 A_{\Pi_{t+1}} + B_{\Pi_{t+1}} 
			&\leq \left( 1 - 15 \frac{e^\lambda}{\lambda \cdot \drem_t(x_t) } \right) \left( A_{\Pi_t} + B_{\Pi_t} \right) + \frac{2}{\drem _t(x_t)}\\
			&\leq \left( 1 - 15 \frac{e^\lambda}{\lambda \cdot \drem_t(x_t) } \right) \cdot \frac{7 \lambda}{50 e^\lambda} + \frac{2}{\drem _t(x_t)} \\
			&\leq \frac{7 \lambda}{50 e^\lambda}.
	\end{align*} 
	Therefore, \eqref{eq:ab_bound_IH} is true for all $t \leq T$, and in particular, for $t = T$.
	
\end{proof}

We have established with Proposition~\ref{prop:weighted_general_ab_bounded} that there exists a path on which the quantities $A$, $B$ remained bounded along the whole path. We now turn our attention to the weight and sum of inverse of remaining degrees in the cycle and prove that they are sufficiently large.

\begin{claim}
	\label{lem:weighted_general_SIRD_cycle_high}
	Let $\Pi = (x_0, \dots, x_T)$ be  a path with $\wrem_{\Pi}(x_T) < 1/2$ and \[ 
	A_\Pi + B_\Pi \leq \frac{7 \lambda}{50 e^\lambda},\] and let $s = act_\Pi(x_T)$ be the activation time of the last vertex, then \begin{align*}
		\sum_{t=s}^{T-1}{ \frac{1}{\drem _t(x_t)} } \geq \frac{2\lambda}{5 e^\lambda}.
	\end{align*}
\end{claim}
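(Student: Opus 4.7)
My plan is to exploit the fact that $a_\Pi(x_T)$ is a single non-negative summand of $\Delta^2 A_\Pi$. Since $B_\Pi \geq 0$, the hypothesis gives $A_\Pi \leq \frac{7\lambda}{50 e^\lambda}$, hence
\[
	a_\Pi(x_T) \;\leq\; \Delta^2 A_\Pi \;\leq\; \Delta^2 \cdot \frac{7\lambda}{50 e^\lambda}.
\]
Unpacking the definition of $a_\Pi(x_T)$ with activation time $s$ and taking logarithms, this rearranges to
\[
	e^\lambda \sum_{t=s}^{T-1} \frac{1}{\drem_t(x_t)} \;\geq\; \lambda \sum_{t=s}^{T-1} \weight(x_T, x_{t+1}) \;-\; \weight_{max} \cdot \log\bigl(\Delta^2 A_\Pi\bigr).
\]
The task then reduces to lower bounding the weight sum and upper bounding the logarithmic error.

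For the weight sum, I would argue that $\sum_{t=s}^{T-1} \weight(x_T, x_{t+1})$ accounts for the total out-weight from $x_T$ to vertices of $\Pi$ up to a single missed term $\weight(x_T, x_s) \leq \weight_{max}$ (with $t+1 = T$ contributing $0$ in absence of self-loops). The stopping condition $\wrem_\Pi(x_T) < \tfrac{1}{2}$ combined with $\wout(x_T) = 1$ implies this total weight exceeds $\tfrac{1}{2}$, yielding $\sum_{t=s}^{T-1} \weight(x_T, x_{t+1}) \geq \tfrac{1}{2} - \weight_{max}$. For the error term, $\frac{7\lambda}{50 e^\lambda} < 1$ gives $\log(\Delta^2 A_\Pi) \leq 2 \log \Delta$, and $\weight_{max} \leq \frac{\lambda}{50 \log \Delta}$ controls it by roughly $\frac{\lambda}{25}$. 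Together with $\lambda \weight_{max} \leq \frac{\lambda}{100}$ (from \eqref{eq:w_max_simple_bound_1}), a short arithmetic gives a right-hand side of the form $\frac{\lambda}{2} - \frac{\lambda}{100} - \frac{\lambda}{25} = \frac{9\lambda}{20}$, comfortably above $\frac{2\lambda}{5}$ after dividing by $e^\lambda$.

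The only delicate point is ensuring the error $\weight_{max} \log(\Delta^2 A_\Pi)$ is subleading compared to $\lambda/2$. This is exactly why the theorem imposes $\weight_{max} \leq \frac{\lambda}{50 \log \Delta}$, and it is the essential use of that hypothesis in this claim; everything else is bookkeeping once $a_\Pi(x_T)$ is pinned down by $A_\Pi$.
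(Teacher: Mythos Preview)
Your proposal is correct and follows essentially the same route as the paper: bound $a_\Pi(x_T)\le \Delta^2 A_\Pi\le \Delta^2$, take logarithms of the definition of $a_\Pi(x_T)$, lower bound $\sum_{t=s}^{T-1}\weight(x_T,x_{t+1})\ge \tfrac12-\weight_{max}$ via the stopping condition, and absorb the $2\weight_{max}\log\Delta$ error using $\weight_{max}\le \lambda/(50\log\Delta)$. The paper's arithmetic is identical up to rounding (it also arrives at $45/100$ before relaxing to $2/5$).
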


\begin{proof}
	We know that $A_\Pi + B_\Pi \leq \frac{7 \lambda}{50 e^\lambda} \leq 1$ since we noted in \eqref{eq:w_max_simple_bound_1} that ${\lambda}/{50 e^\lambda} \leq {1}/{100} $. This implies \begin{align*}
		a_{\Pi}(x_T)
			\leq \Delta^2 \cdot A_\Pi
			\leq \Delta^2.
	\end{align*}
	Furthermore, since $\wrem_T(x_T) \leq {1}/{2}$, we have $\sum_{t=s}^T{\weight(x_T, x_t)} = \wout(x_T) - \wrem_T(x_T) \geq {1}/{2}$. Since every edge has weight at most $\weight_{max} \leq {1}/{100}$, we get $\sum_{t=s}^{T-1}{\weight(x_T, x_{t+1})} \geq {1}/{2} - {1}/{100}$. Recall that $a_{\Pi}(x_T)$ is defined as \begin{align*}
		a_{\Pi}(x_T) = e^{ \frac{1}{\weight_{max}} \sum_{t=s}^{T-1}{ \left( \lambda \cdot \weight(x_T, x_{t+1}) - e^\lambda \cdot \frac{1}{\drem _t(x_t)} \right) } },
	\end{align*} so, by monotonicity of $\log$, we obtain \begin{align*}
		2 \log \Delta 
			&\geq \log a_{\Pi}(x_T) \\
			&= \frac{\lambda}{\weight_{max}} \cdot \sum_{t=s}^{T-1}{\weight(x_T, x_{t+1})} - \frac{e^\lambda}{\weight_{max}} \sum_{t=s}^{T-1}{ \frac{1}{\drem _t(x_t)} } \\
			&\geq \left( \frac{1}{2} - \frac{1}{100} \right) \frac{\lambda}{\weight_{max}} - \frac{e^\lambda}{\weight_{max}} \sum_{t=s}^{T-1}{ \frac{1}{\drem _t(x_t)} }.
	\end{align*} 
	Rearranging the terms and using the fact that \(\weight_{max} \le \lambda / 50 \log \Delta\) gives the desired lower bound \begin{align*}
		\sum_{t=s}^{T-1}{ \frac{1}{\drem _t(x_t)} } 
			&\geq \left( \frac{1}{2} - \frac{1}{100} \right) \frac{\lambda}{e^\lambda} - 2 \frac{\weight_{max} \log \Delta}{ e^\lambda}  \\
			&\geq \left( \frac{1}{2} - \frac{1}{100} \right) \frac{\lambda}{e^\lambda} - \frac{\lambda}{25 e^\lambda} \\
			&\geq \frac{2\lambda}{5 e^\lambda}.
	\end{align*}
\end{proof}

We are now ready to prove Theorem~\ref{thm:weighted_general_weight_cycle_high}.

\begin{proof}[Proof of Theorem~\ref{thm:weighted_general_weight_cycle_high}]
	Let $\Pi = (x_0, \dots, x_T)$ be a path with $\wrem_{\Pi}(x_T) < 1/2$ and \[ 
	A_\Pi + B_\Pi \leq \frac{7 \lambda}{50 e^\lambda}\numberthis \label{eq:weighted_general_weight_cycle_high_proof_2},\] and let $s = act_\Pi(x_T)$. Note that the existence of $\Pi $ is guaranteed by Proposition~\ref{prop:weighted_general_ab_bounded}. First, observe that \begin{align}
		B_\Pi 
			&= \sum_{ t=0 }^{T-1}{ \left( \frac{1}{\drem _t(x_t)} e^{ -\frac{50 e^\lambda}{\lambda} \sum_{r=t}^{T-1}{ \weight(x_r, x_{r+1}) } } \right) } \nonumber \\
			&\ge \sum_{ t=s }^{T-1}{ \left( \frac{1}{\drem _t(x_t)} e^{ -\frac{50 e^\lambda}{\lambda} \sum_{r=t}^{T-1}{ \weight(x_r, x_{r+1}) } } \right) } \nonumber \\
			&\geq \left( \sum_{t=s}^{T-1}{ \frac{1}{\drem_t(x_t)} } \right) \cdot \left( e^{ -\frac{50 e^\lambda}{\lambda} \sum_{r=s}^{T-1}{ \weight(x_r, x_{r+1}) } } \right). \label{eq:weighted_general_weight_cycle_high_proof_1}
	\end{align}
 Here, the first inequality follows from considering only the terms of the sum from \(t=s\) to \(T-1\), and the second is obtained by adding terms from \(r=s\) to \(t-1\) in the sum in the exponential.

By Claim~\ref{lem:weighted_general_SIRD_cycle_high} we know that \begin{align}
		\sum_{t=s}^{T-1}{ \frac{1}{\drem_t(x_t)} } \geq \frac{2\lambda}{5 e^\lambda}. \label{eq:weighted_general_weight_cycle_high_proof_3}
	\end{align}
	Using \eqref{eq:weighted_general_weight_cycle_high_proof_2}, \eqref{eq:weighted_general_weight_cycle_high_proof_1} and \eqref{eq:weighted_general_weight_cycle_high_proof_3} we obtain \begin{align*}
		\frac{7 \lambda}{50 e^\lambda}
			&\geq \frac{2\lambda}{5 e^\lambda} \cdot e^{ -\frac{50 e^\lambda}{\lambda} \sum_{r=s}^{T-1}{ \weight(x_r, x_{r+1}) } },
	\end{align*} which by monotonicity of $\log$ yields \begin{align*}
		\sum_{t=s}^{T-1}{ \weight(x_t, x_{t+1}) }
			\geq \frac{\lambda}{50 e^\lambda} \log \frac{100}{35} \geq \frac{\lambda}{50 e^\lambda}.
	\end{align*}
	Since $s = act_P(x_T)$, we can simply take the edge $(x_T, x_s)$ to create the cycle $C = (x_s, \dots, x_T)$ which has weight \[\weight(C) = \weight(x_T, x_s) + \sum_{t=s}^{T-1}{\weight(x_t, x_{t+1})} \geq \frac{\lambda}{50 e^\lambda}.\]
\end{proof}

% !TeX root = long_directed_cycles.tex
% !TeX spellcheck = en_GB 

\section{Conclusion}
\label{sec:conclusion}
In this paper, we make improvements on two lines of questions in the literature concerning cycles in digraphs: decomposing Eulerian digraphs into few cycles, and finding heavy cycles in edge-weighted digraphs. We give the first non-trivial upper bound on the number of cycles we can decompose any Eulerian digraph into. 

For cycle decompositions, we propose the sum of inverse out-degrees of vertices in a cycle as a natural measure of its `weight' in a cycle decomposition. A central observation is Lemma \ref{lem:small_decomp_existence}, which links the existence of heavy cycles in Eulerian digraphs with the existence of a decomposition with few cycles. In Section \ref{sec:unweighted_logn_case}, we combine the natural greedy approach to find long cycles with a random walk to show that, under suitable minimum degree conditions, a digraph contains a cycle of weight $\Omega(1)$. In Section \ref{sec:weighted_case} we derandomize this technique in order to extend our result to more general weight functions. This modification allows us to improve the factor of $\log n$ to $\log \Delta$ in our main results. Note that our proofs can easily be turned into efficient algorithms to find heavy cycles and cycle decompositions.

By combining these ideas, we have shown that the number of cycles needed to decompose any $n$-vertex and $m$-edge Eulerian digraph is at most a factor $O(\log \Delta)$ away from the conjectured bound of $O(n)$. Looking closer at our analysis, the log-factor appears when iteratively removing cycles from $G$, as in Lemma \ref{lem:small_decomp_existence}, where $O(n \log \frac{m}{n})$ cycles are removed while the remaining graph has a high minimum degree, and an additional $O(n \log \Delta)$ cycles are removed while the minimum degree is low. It seems likely that the latter contribution can be decreased by a more careful consideration of graphs with low minimum degree. Note however that this would at most improve the log-factor from $\log\Delta$ to $\log \frac{m}{n}$.

Another potential way to improve upon our result could be to consider the structure of the intermediate graph after removing some number of cycles. On the one hand, the analysis in Section \ref{sec:unweighted_logn_case} shows that most of the edges in a heavy cycle can be chosen at random. On the other hand, it appears that the analysis in Section \ref{sec:unweighted_logn_case}, and in particular Lemma \ref{lem:visited_neighbours_bound} is wasteful for all but very particular graphs. It might also be possible to remove the $\log$ factor completely by modifying the approach to find all cycles simultaneously instead of iteratively. 

Regarding heavy cycles, there seem to be some possibilities for further line of research.
Another conjecture by Bollob\'as and Scott about heavy cycles in weighted Eulerian graphs somewhat combines Theorem~\ref{thm:main_result} and~\ref{thm:weighted_digraph_heavy_cycle_existence}. Our methods do not give any immediate results but we believe that refining our ideas might help solve the following.
\begin{conjecture}[Bollob\'as, Scott \cite{BS96}]\label{conj:eulerneednodegree}
Let $G$ be a weighted digraph such that every vertex $v\in V(G)$ satisfies $\win(v)=\wout(v)=1$. Then $G$ contains a cycle $C$ with $w(C)\ge 1$.
\end{conjecture}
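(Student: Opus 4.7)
\textbf{Proof proposal for Conjecture~\ref{conj:eulerneednodegree}.} The plan is to adapt the derandomized walk from Section~\ref{sec:weighted_case} so that it takes full advantage of the additional hypothesis $\win(v)=1$, which Theorem~\ref{thm:weighted_digraph_heavy_cycle_existence} does not use. The Bollob\'as--Scott--Spencer construction saturating Theorem~\ref{thm:weighted_digraph_heavy_cycle_existence} badly violates $\win(v)=1$ --- the root has enormous in-weight while the leaves have in-weight only $1/l$ --- so the extra constraint is precisely what should rule out the logarithmic losses.

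I would first dispose of a trivial case: if some edge $(u,v)$ carries weight at least $1/2$, then $\wout(u)-\weight(u,v)\leq 1/2$ and $\win(v)-\weight(u,v)\leq 1/2$, so both the remaining out-neighbourhood of $u$ and the remaining in-neighbourhood of $v$ carry small total weight. Iterating this observation along any short directed path from $v$ back to $u$ (which exists since every vertex has both incoming and outgoing weight) one should be able to close a cycle of weight at least $1$. For the main case $\weight_{max}<1/2$, my plan is to enrich the potentials $A_P, B_P$ of Section~\ref{sec:weighted_case} with dual potentials $A^\star_P, B^\star_P$ that track the cumulative \emph{in-weight from the path} entering each already-visited vertex. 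The deterministic walk would greedily minimise the combined potential $A+B+A^\star+B^\star$ and terminate either when some earlier vertex $x_s$ has accumulated total in-weight at least $1/2$ from the path (closing a heavy cycle through the heaviest back-edge to $x_s$), or when $\wrem_t(x_t)<1/2$ as in the current proof. Because $\win(x_s)=1$ caps the accumulated in-weight at every vertex, a martingale-style argument analogous to Claims~\ref{lem:weighted_general_a_expectation} and~\ref{lem:weighted_general_b_expectation} should force termination within $O(n)$ steps while keeping the potentials bounded, and therefore without needing to cap $\lambda$ by $\log\log\Delta$.

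The main obstacle is that the $\log\log\Delta/\log\Delta$ factor in Theorem~\ref{thm:weighted_general_weight_cycle_high} comes from the tension between the convexity bound $e^{\lambda x/\weight_{max}}\leq 1+(e^\lambda-1)x/\weight_{max}$ and the upper bound on $\weight_{max}$; removing this bottleneck via a symmetric, two-sided analysis is exactly where the conjecture's difficulty lies, and is why the authors describe their current methods as giving no immediate result. I expect the technical heart of any successful attack to be a Birkhoff--von Neumann decomposition of the doubly stochastic matrix $P_{uv}=\weight(u,v)$ --- which precisely encodes the hypothesis $\win=\wout=1$ --- combined with the symmetric walk above, in order to show that a constant fraction of the matrix mass concentrates on a single directed cycle of weight $\Omega(1)$. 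Making this global decomposition interact cleanly with the local walk will, I expect, be the principal technical hurdle.
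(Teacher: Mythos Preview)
The statement you are attempting is Conjecture~\ref{conj:eulerneednodegree}, which the paper explicitly leaves \emph{open}: the authors write that ``our methods do not give any immediate results'' and list it among directions for future work. There is therefore no proof in the paper to compare your proposal against, and your submission is not a proof but a research plan with acknowledged gaps. That is a legitimate thing to write, but it should not be labelled a proof.

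On the content of the plan, two concrete points. First, your ``trivial case'' is not actually trivial. Knowing that $\weight(u,v)\ge 1/2$ tells you that the remaining out-weight at $u$ and the remaining in-weight at $v$ are each at most $1/2$, but this says nothing about the weight of any directed path from $v$ back to $u$; such a path exists because the graph is sinkless, but there is no reason it should contribute the additional $1/2$ of weight you need, and ``iterating this observation'' does not produce one. Second, the heart of your proposal is the assertion that augmenting $A_P,B_P$ by dual potentials $A^\star_P,B^\star_P$ tracking accumulated in-weight would let you drop the cap $\lambda\le\log\log\Delta$. You correctly locate the bottleneck --- the convexity estimate in Claim~\ref{lem:weighted_general_a_expectation} forces $\weight_{max}\le\lambda/(50\log\Delta)$ --- but you give no mechanism by which the in-weight condition removes it: the expectation bound $\E[e^{\lambda\weight(v,u)/\weight_{max}}]\le e^{(e^\lambda-1)/(\weight_{max}\,\drem_t(v_t))}$ uses only $\wout(v)\le 1$, and bounding $\win$ does not change this computation. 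The Birkhoff--von~Neumann idea is natural, but decomposing the doubly stochastic weight matrix into permutation matrices yields \emph{vertex-disjoint unions of cycles}, each with total weight $n$ times its coefficient; it does not single out one cycle of weight $\ge 1$, and you have not said how to combine it with the walk. As it stands the proposal identifies the right obstacles but does not overcome any of them.
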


 In the proof of Theorem~\ref{thm:main_result}, we use Corollary~\ref{cor:weighted_general_constant_weight_cycle_existence} which states that any digraph with sufficient minimal degree contains a cycle of constant weight. As the construction of Bollob\'as and Scott in \cite{BS96} shows, the minimal degree condition of $\delta \geq \Omega(\log \Delta)$ cannot be dropped. However, we believe that being \emph{Eulerian} is a sufficiently strong condition and that the same result, without the minimal degree condition, should be true on Eulerian digraphs.  

\begin{question}
	\label{conj:us}
	Is it true that if an edge-weighted Eulerian digraph $G$ is such that every vertex has $\wout(v) = 1$ then $G$ contains a cycle of weight at least $\Omega(1)$?
\end{question}
 As a first step it would be interesting to find graphs where our methods fail.
\begin{question}
Is there a Eulerian digraph $G$ such that the random walk in Section \ref{sec:unweighted_logn_case} finds a cycle with weight $o(1)$ with high probability? 
\end{question}
Indeed, Question~\ref{conj:us} can be seen as a weaker version of another conjecture proposed by Bollob\'as and Scott.

\begin{conjecture}[Bollob\'as, Scott \cite{BS96}]
	\label{conj:bollobas_scott_2}
	Let $G$ be a digraph with edge-weighting $\weight$ such that $\din(v) = \dout(v)$ for every vertex $v$ in $G$. Then $G$ contains a cycle of weight at least $c \weight(G) / (n-1)$, where $c$ is an absolute constant.
\end{conjecture}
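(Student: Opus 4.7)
The plan is to exploit the link between finding heavy cycles and cycle decompositions that underpins the entire paper. In a balanced digraph the edge set decomposes into edge-disjoint cycles whose weights sum to $\weight(G)$, so any decomposition into $k$ cycles immediately yields one of weight at least $\weight(G)/k$ by pigeonhole. Viewed this way, Conjecture~\ref{conj:bollobas_scott_2} is essentially the weighted shadow of Conjecture~\ref{conj:bollobas_scott}: a positive answer to the latter with $k = Cn$ would imply the former with $c = 1/C$, applied component by component (each component of a balanced digraph is Eulerian).

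As an unconditional first step I would record the partial result one obtains by plugging Theorem~\ref{thm:main_result} into this reduction. Applied to each connected component, Theorem~\ref{thm:main_result} produces a decomposition into $O(n_i \log \Delta)$ cycles; summing yields a global decomposition of $G$ of the same order and hence a cycle of weight at least $\Omega\bigl(\weight(G)/(n \log \Delta)\bigr)$. This already settles Conjecture~\ref{conj:bollobas_scott_2} up to the same $\log \Delta$ slack as Theorem~\ref{thm:main_result} and, notably, does not require any new ideas beyond what is developed in the paper.

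To close the $\log \Delta$ gap one seemingly has to find heavy cycles \emph{directly}, and the natural attempt is to adapt the derandomized greedy walk of Section~\ref{sec:weighted_case}. After normalising so that $\weight(G) = n$, the mean out-weight is $1$, and one could run the walk of Proposition~\ref{prop:weighted_general_ab_bounded} with the stopping rule rewritten as $\wrem_t(x_t) < \wout(x_t)/2$ instead of $\wrem_t(x_t) < 1/2$. The quantities $A$ and $B$ should then still control the weight of the closed cycle relative to $\wout(x_T)$, and one could hope to amortise over a cleverly chosen starting vertex whose out-weight is comparable to the global average.

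The main obstacle, I expect, lies precisely in the non-uniformity of out-weights. The proofs of Claims~\ref{lem:weighted_general_a_expectation} and~\ref{lem:weighted_general_b_expectation} use the stopping condition $\wrem_t(x_t) \geq 1/2$ to lower-bound an expected single-step edge weight by $1/(2 \drem_t(x_t))$; with variable $\wout$, this estimate degrades every time the walk enters a low-weight vertex, and nothing in the greedy rule prevents that. The example of Bollob\'as and Scott showing that the $\log\log n / \log n$ weight bound is sharp in the sinkless setting sits not too far from being a counterexample here, and it is only the balanced condition $\din(v) = \dout(v)$ — the one structural assumption that distinguishes Conjecture~\ref{conj:bollobas_scott_2} from the sinkless case — that rules it out. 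Exploiting this balanced condition to keep the walk in regions of out-weight $\Omega(1)$, or equivalently to bound the time spent at vertices of very small out-weight using the fact that their in-weight must match, is where I would expect the genuinely new argument to be needed.
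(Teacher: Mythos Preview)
This statement is a \emph{conjecture}, and the paper does not prove it. The only claim the paper makes is the remark immediately following it: that Theorem~\ref{thm:main_result} yields a cycle of weight $\Omega(\weight(G)/(n\log\Delta))$, confirming the conjecture up to a $\log\Delta$ factor. Your proposal recovers precisely this partial result by the same mechanism --- decompose each component into $O(n_i\log\Delta)$ cycles and apply pigeonhole --- so on the one point where a comparison is possible, you and the paper agree.

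Everything beyond that in your write-up (the attempt to adapt the walk of Section~\ref{sec:weighted_case} to non-uniform out-weights, and the diagnosis of where it breaks) is speculation about how to close the gap. The paper does not attempt this; it simply leaves the conjecture open and observes, as you do, that the balanced condition $\din(v)=\dout(v)$ is what separates this setting from the sinkless one where the $\log\log n/\log n$ barrier is genuine. So there is no ``paper's proof'' to compare against, and your discussion of obstacles is a reasonable outline of why the conjecture remains open rather than a gap in a proof.
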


We remark that it follows as a corollary of Theorem \ref{thm:main_result} that any digraph as above contains a cycle of weight $\Omega(\weight(G)/n \log \Delta)$, which confirms this conjecture up to a log-factor. As mentioned in the introduction, there exist edge-weightings of strongly connected digraphs where the weight of the longest cycle is only $O(\weight(G)/n^2)$. This confirms that requiring a digraph to have equal in- and out-degrees makes a significant difference for the existence of heavy cycles in edge-weighted graphs. 

\section*{Acknowledgements}

This project started at a workshop of the research group of Angelika Steger in Buchboden in July 2019. We would like to thank Milo\v{s} Truji\`c for suggesting the problem to us. We also thank the anonymous referees for their valuable feedback and for their suggestions. 

\bibliographystyle{abbrv}
\bibliography{long_directed_cycles}

\begin{thebibliography}{10}

\bibitem{BM86}
W.~Bienia and H.~Meyniel.
\newblock Partitions of digraphs into paths or circuits.
\newblock {\em Discrete mathematics}, 61(2-3):329--331, 1986.

\bibitem{BS96}
B.~Bollob{\'a}s and A.~D. Scott.
\newblock A proof of a conjecture of {B}ondy concerning paths in weighted
  digraphs.
\newblock {\em Journal of combinatorial theory, Series B}, 66(2):283--292,
  1996.

\bibitem{B14}
J.~A. Bondy.
\newblock Beautiful conjectures in graph theory.
\newblock {\em European J. Combin.}, 37:4--23, 2014.

\bibitem{BF89}
J.~A. Bondy and G.~Fan.
\newblock Optimal paths and cycles in weighted graphs.
\newblock {\em Ann. Discrete Math. v41}, pages 53--69, 1989.

\bibitem{BF91}
J.~A. Bondy and G.~Fan.
\newblock Cycles in weighted graphs.
\newblock {\em Combinatorica}, 11(3):191--205, 1991.

\bibitem{CFS14}
D.~Conlon, J.~Fox, and B.~Sudakov.
\newblock Cycle packing.
\newblock {\em Random Structures \& Algorithms}, 45(4):608--626, 2014.

\bibitem{D86}
N.~Dean.
\newblock What is the smallest number of dicycles in a dicycle decomposition of
  an {E}ulerian digraph?
\newblock {\em Journal of graph theory}, 10(3):299--308, 1986.

\bibitem{D12}
R.~Diestel.
\newblock {\em Graph Theory, 5th Edition}, volume 173 of {\em Graduate texts in
  mathematics}.
\newblock Springer, 2017.

\bibitem{E83}
P.~Erd{\H{o}}s.
\newblock On some of my conjectures in number theory and combinatorics.
\newblock In {\em Proceedings of the fourteenth Southeastern conference on
  combinatorics, graph theory and computing}, volume~39, pages 3--19, 1983.

\bibitem{EG59}
P.~Erd{\H{o}}s and T.~Gallai.
\newblock On maximal paths and circuits of graphs.
\newblock {\em Acta Mathematica Hungarica}, 10(3-4):337--356, 1959.

\bibitem{FX02}
G.~Fan and B.~Xu.
\newblock Haj{\'o}s’ conjecture and projective graphs.
\newblock {\em Discrete mathematics}, 252(1-3):91--101, 2002.

\bibitem{FK16}
A.~Frieze and M.~Karo{\'n}ski.
\newblock {\em Introduction to random graphs}.
\newblock Cambridge University Press, 2016.

\bibitem{FGH17}
E.~Fuchs, L.~Gellert, and I.~Heinrich.
\newblock Cycle decompositions of pathwidth-6 graphs.
\newblock {\em arXiv preprint arXiv:1705.07066}, 2017.

\bibitem{GGKO19}
A.~Gir{\~a}o, B.~Granet, D.~K\"uhn, and D.~Osthus.
\newblock Path and cycle decompositions of dense graphs.
\newblock {\em arXiv preprint arXiv:1911.05501}, 2019.

\bibitem{HNS17}
I.~Heinrich, M.~V. Natale, and M.~Streicher.
\newblock Haj\'os' cycle conjecture for small graphs.
\newblock {\em arXiv preprint arXiv:1705.08724}, 2017.

\bibitem{HMSSY13}
H.~Huang, J.~Ma, A.~Shapira, B.~Sudakov, and R.~Yuster.
\newblock Large feedback arc sets, high minimum degree subgraphs, and long
  cycles in {E}ulerian digraphs.
\newblock {\em Combinatorics, Probability and Computing}, 22(6):859--873, 2013.

\bibitem{KO13}
D.~K\"uhn and D.~Osthus.
\newblock Hamilton decompositions of regular expanders: A proof of {K}elly’s
  conjecture for large tournaments.
\newblock {\em Advances in Mathematics}, 237:62 -- 146, 2013.

\bibitem{LZ12}
B.~Li and S.~Zhang.
\newblock Notes on heavy cycles in weighted digraphs.
\newblock {\em Applied Mathematics Letters}, 25(11):1629 -- 1631, 2012.

\bibitem{L68}
L.~Lov\'asz.
\newblock On covering of graphs.
\newblock {\em Theory of Graphs}, pages 231--236, 1968.

\bibitem{S92}
K.~Seyffarth.
\newblock Haj\'os' conjecture and small cycle double covers of planar graphs.
\newblock {\em Discrete mathematics}, 101(1-3):291--306, 1992.

\end{thebibliography}

\end{document}